\newtheorem{theorem}{Theorem}[section]
\newtheorem{corollary}[theorem]{Corollary}
\newtheorem{definition}[theorem]{Definition}
\newtheorem{lemma}[theorem]{Lemma}
\newtheorem{proposition}[theorem]{Proposition}
\newtheorem{remark}[theorem]{Remark}
\newenvironment{proof}[1][Proof]{\noindent \textbf{#1.} }{\  $\Box$}
\numberwithin{equation}{section}
\begin{document}

\title{\textbf{Gradient Estimates for  Nonlinear Diffusion Semigroups by Coupling Methods }}
\author{ Yongsheng Song\thanks{Academy of Mathematics
and Systems Science, CAS, Beijing, China, yssong@amss.ac.cn.
Research supported by by NCMIS; Youth Grant of NSF (No. 11101406);
Key Project of NSF (No. 11231005); Key Lab of Random Complex
Structures and Data Science, CAS (No. 2008DP173182).} }
\maketitle
\date{}

\begin{abstract}Our purpose is to obtain gradient estimates for certain nonlinear partial differential equations by coupling methods.  First we derive uniform gradient estimates
for a certain semi-linear PDEs based on the coupling method
introduced in Wang (2011) and the theory of backward SDEs. Then we
generalize Wang's coupling to the $G$-expectation space and obtain
gradient estimates for  nonlinear diffusion semigroups, which
correspond to the solutions of a certain fully nonlinear PDEs.

\end{abstract}

\textbf{Key words}: gradient estimates, coupling methods, $G$-expectation, nonlinear PDEs

\textbf{MSC-classification}: 35K55, 60H10, 60J60


\section{Introduction}
The classical Feynman-Kac formula established the connection between
linear partial differential equations and stochastic processes,
which is the starting point for the study of PDEs by probabilistic
methods. For example, probabilistic tools, such as Malliavin calculus,
the method of coupling, etc, can be used to obtain the regularity
property for PDEs.

In 1990, Pardoux and Peng (\cite {PP90}) introduced the general
nonlinear BSDEs, based on which \cite {Peng1991} and \cite {PP92}
gave a probabilistic formula for a certain quasi-linear parabolic
partial differential equations. This is the so-called generalized
Feynman-Kac formula, which provides a way  to study quasi-linear
PDEs by probabilistic methods. \cite {FT} established
Bismut-Elworthy formula for backward SDEs by the method of Malliavin
caiculus. As in the linear case, this formula provides  gradient
estimates for the solutions to the associated PDEs.

A natural question is how to give a probabilistic interpretation for
fully nonlinear PDEs, which is one of the main motivations
for Shige Peng to establish the fully nonlinear expectation theory.
$G$-expectation is a typical time-consistent sublinear expectation.
 In the $G$-expectation space, the nonlinear semigroups associated
 with SDEs driven by $G$-Brownian motion are solutions to certain
 fully nonlinear PDEs.

 In this article, we derive gradient estimates for certain nonlinear
 partial differential equations by coupling methods.  First, in Section 3, we obtain
 uniform gradient estimates for a certain semi-linear PDEs based on
 the coupling method introduced in \cite {Wang11} and the theory of backward SDEs. Then, in Section 4, we
generalize Wang's coupling to the $G$-expectation space and obtain
gradient estimates for  nonlinear diffusion semigroups. Our main
results are Theorem \ref {main1} and Theorem \ref {main2}.

\section{Some Definitions and Notations about $G$-expectation}
We review some basic notions and definitions of the related spaces under $G$%
-expectation. The readers may refer to \cite{P07a}, \cite{P07b}, \cite{P08a}%
, \cite{P08b}, \cite{P10} for more details.

Let $\Omega_T=C_{0}([0,T];\mathbb{R}^{d})$ be the space of all $%
\mathbb{R}^{d}$-valued continuous paths $\omega=(\omega(t))_{t\geq0}\in
\Omega$ with $\omega(0)=0$ and let $B_{t}(\omega)=\omega(t)$ be the
canonical process.

Let us recall the definitions of $G$-Brownian motion and its corresponding $%
G $-expectation introduced in \cite{P07b}. Set
\begin{equation*}
L_{ip}(\Omega_{T}):=\{
\varphi(\omega(t_{1}),\cdots,\omega(t_{n})):t_{1},\cdots,t_{n}\in
\lbrack0,T],\  \varphi \in C_{b,Lip}((\mathbb{R}^{d})^{n}),\ n\in \mathbb{N}%
\},
\end{equation*}
where $C_{b,Lip}(\mathbb{R}^{d})$ is the collection of bounded Lipschitz
functions on $\mathbb{R}^{d}$.

We are given a function%
\begin{equation*}
G:\mathbb{S}_d\mapsto \mathbb{R}
\end{equation*}
satisfying the following monotonicity,  sublinearity and positive homogeneity:

\begin{description}
\item[A1.] \bigskip$G(a)\geq G(b),\  \ $if $a,b\in \mathbb{S}_d$
and $a\geq b;$

\item[A2.] $G(a+b)\leq G(a)+G(b)$, for each $a,b\in \mathbb{S}_d$;

\item[A3.] $  G(\lambda a)=\lambda
G(a)$ for  $a\in \mathbb{S}_d$ and $\lambda \geq0.$
\end{description}

\begin{remark}
When $d=1$, we have $G(a):=\frac{1}{2}(\overline{\sigma}^{2}a^{+}-\underline{%
\sigma}^{2}a^{-})$, for $0\leq \underline{\sigma}^{2}\leq \overline{\sigma}%
^{2}$.
\end{remark}

\bigskip \ For each $\xi(\omega)\in L_{ip}(\Omega_{T})$ of the form
\begin{equation*}
\xi(\omega)=\varphi(\omega(t_{1}),\omega(t_{2}),\cdots,\omega(t_{n})),\  \
0=t_{0}<t_{1}<\cdots<t_{n}=T,
\end{equation*}
we define the following conditional  $G$-expectation
\begin{equation*}
\mathbb{E}_{t}[\xi]:=u_{k}(t,\omega(t);\omega(t_{1}),\cdots,\omega
(t_{k-1}))
\end{equation*}
for each $t\in \lbrack t_{k-1},t_{k})$, $k=1,\cdots,n$. Here, for each $%
k=1,\cdots,n$, $u_{k}=u_{k}(t,x;x_{1},\cdots,x_{k-1})$ is a function of $%
(t,x)$ parameterized by $(x_{1},\cdots,x_{k-1})\in (\mathbb{R}^d)^{k-1}$, which
is the solution of the following PDE ($G$-heat equation) defined on $%
[t_{k-1},t_{k})\times \mathbb{R}^d$:
\begin{equation*}
\partial_{t}u_{k}+G(\partial^2_{x}u_{k})=0\
\end{equation*}
with terminal conditions
\begin{equation*}
u_{k}(t_{k},x;x_{1},\cdots,x_{k-1})=u_{k+1}(t_{k},x;x_{1},\cdots x_{k-1},x),
\, \, \hbox{for $k<n$}
\end{equation*}
and $u_{n}(t_{n},x;x_{1},\cdots,x_{n-1})=\varphi (x_{1},\cdots x_{n-1},x)$.

The $G$-expectation of $\xi(\omega)$ is defined by $\mathbb{E}[\xi]=%
\mathbb{E}_{0}[\xi]$. From this construction we obtain a natural norm $%
\left \Vert \xi \right \Vert _{L_{G}^{p}}:=\mathbb{E}[|\xi|^{p}]^{1/p}$.
The completion of $L_{ip}(\Omega_{T})$ under $\left \Vert \cdot \right \Vert
_{L_{G}^{p}}$ is a Banach space, denoted by $L_{G}^{p}(\Omega_{T})$. The
canonical process $B_{t}(\omega):=\omega(t)$, $t\geq0$, is called a $G$%
-Brownian motion in this sublinear expectation space $(\Omega,L_{G}^{1}(%
\Omega ),\mathbb{E})$.

\begin {definition} A process $\{M_t\}$ with values in
$L^1_G(\Omega_T)$ is called a $G$-martingale if $\mathbb{E}_s(M_t)=M_s$
for any $s\leq t$. If $\{M_t\}$ and  $\{-M_t\}$ are both
$G$-martingales, we call $\{M_t\}$ a symmetric $G$-martingale.
\end {definition}

\begin{theorem}
\label{the2.7} (\cite{DHP11,HP09}) There exists a weakly compact subset $\mathcal{P}%
\subset\mathcal{M}_{1}(\Omega_{T})$, the set of probability measures on
$(\Omega_{T},\mathcal{B}(\Omega_{T}))$, such that
\[
\mathbb{E}[\xi]=\sup_{P\in\mathcal{P}}E_{P}[\xi]\ \ \text{for
\ all}\ \xi\in L_{ip}(\Omega_T).
\]
$\mathcal{P}$ is called a set that represents $\mathbb{E}$.
\end{theorem}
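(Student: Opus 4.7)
The plan is to prove Theorem \ref{the2.7} by a Hahn--Banach / Daniell--Stone representation argument adapted to the sublinear setting. First I would verify that $\mathbb{E}$ is a sublinear expectation on $L_{ip}(\Omega_T)$: preservation of constants, monotonicity, sub-additivity and positive homogeneity all follow from the corresponding properties A1--A3 of $G$ together with the standard comparison principle for the $G$-heat equation $\partial_t u + G(\partial_x^2 u)=0$. Subadditivity plus monotonicity yields the Lipschitz bound $|\mathbb{E}[\xi]-\mathbb{E}[\eta]|\leq \|\xi-\eta\|_\infty$, which lets $\mathbb{E}$ extend uniquely to a sublinear functional on the sup-norm closure of $L_{ip}(\Omega_T)$ inside $C_b(\Omega_T)$ (and in fact to all of $C_b(\Omega_T)$).

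The heart of the argument is a Daniell-type continuity from above: if $\xi_n\in L_{ip}(\Omega_T)$ with $\xi_n\downarrow 0$ pointwise, then $\mathbb{E}[\xi_n]\downarrow 0$. To obtain this I would first derive a priori moment bounds for the canonical process, e.g.\ $\mathbb{E}[\sup_{s\leq T}|B_s|^p]<\infty$, by applying the PDE definition to polynomial test functions and using the sublinear version of Doob's inequality. Then, for any $\varepsilon>0$, I would approximate $\xi_n$ by its restriction to a compact set $K_\varepsilon\subset \Omega_T$ whose complement has small $\mathbb{E}$-capacity (exploiting the moment bound plus the Arzel\`a--Ascoli type characterisation of compacts in $C_0([0,T];\mathbb{R}^d)$ via modulus of continuity), and invoke Dini's theorem on $K_\varepsilon$ to get uniform decay of $\xi_n$ there.

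Once continuity from above is in hand, for each fixed $\xi_0\in C_b(\Omega_T)$ a Hahn--Banach separation gives a linear functional $\ell$ on $C_b(\Omega_T)$ with $\ell\leq \mathbb{E}$ and $\ell(\xi_0)=\mathbb{E}[\xi_0]$; since $\ell\leq \mathbb{E}$ inherits continuity from above, the Daniell--Stone theorem produces a probability measure $P\in \mathcal{M}_1(\Omega_T)$ with $\ell(\cdot)=E_P[\cdot]$. Let $\mathcal{P}$ be the collection of all such $P$ obtained by varying $\xi_0$; by construction $\mathbb{E}[\xi]=\sup_{P\in\mathcal{P}} E_P[\xi]$ for every $\xi\in L_{ip}(\Omega_T)$. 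For weak compactness, the uniform bound $\sup_{P\in\mathcal{P}} E_P[|B_t-B_s|^{2k}]\leq \mathbb{E}[|B_t-B_s|^{2k}]\leq C|t-s|^k$ (again from polynomial test functions in the $G$-heat equation) gives tightness via Kolmogorov's criterion, hence relative compactness by Prokhorov; closedness under weak convergence is automatic because the inequality $E_{P_n}[\xi]\leq \mathbb{E}[\xi]$ for $\xi\in C_b(\Omega_T)$ passes to the weak limit.

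The main obstacle is the continuity-from-above step. Because $\mathbb{E}$ is defined through fully nonlinear PDEs, one cannot appeal to classical monotone/dominated convergence directly; instead one must quantify exceptional sets via the sublinear capacity and combine pathwise tightness with Dini compactness, which is the genuinely delicate technical ingredient.
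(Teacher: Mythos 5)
The paper does not prove this theorem; it is quoted verbatim from \cite{DHP11,HP09}, so there is no internal proof to compare against. Judged on its own terms, your outline follows the abstract route (sublinearity, Hahn--Banach, Daniell--Stone, Prokhorov), which is indeed how the representation theorem for \emph{general regular} sublinear expectations is proved in \cite{DHP11}. Several of your steps are sound: the comparison principle for the $G$-heat equation does give monotonicity and sub-additivity of $\mathbb{E}$ on $L_{ip}(\Omega_T)$, hence the Lipschitz bound and the extension by density; any Hahn--Banach functional $\ell\le\mathbb{E}$ is automatically monotone, so $0\le\ell(\xi_n)\le\mathbb{E}[\xi_n]$ and continuity from above transfers to $\ell$; and tightness of the resulting family via $E_P[|B_t-B_s|^{2k}]\le\mathbb{E}[|B_t-B_s|^{2k}]\le C_k|t-s|^k$ (obtained by truncation and a polynomial supersolution of the $G$-heat equation), combined with Kolmogorov--Prokhorov and the stability of the inequality $E_P\le\mathbb{E}$ under weak limits, correctly yields weak compactness once the rest is in place.

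The genuine gap is exactly where you locate it, and as sketched that step is circular. To run Dini's theorem off a compact set $K_\varepsilon$ you must control $\mathbb{E}$ of something dominating $\mathbf{1}_{K_\varepsilon^c}$, but at this stage $\mathbb{E}$ is defined only on cylinder functions, and no bounded Lipschitz cylinder function with small expectation can dominate the indicator of the complement of a compact subset of $C_0([0,T];\mathbb{R}^d)$: compactness there is an equicontinuity condition, invisible to any finite set of coordinates. Likewise $\sup_{s\le T}|B_s|$ and the capacity of $K_\varepsilon^c$ are path-dependent, non-cylinder objects; making sense of $\mathbb{E}$ applied to them requires either a representing family $\mathcal{P}$ (which is what you are trying to construct) or an extension of $\mathbb{E}$ beyond the sup-norm closure of $L_{ip}(\Omega_T)$, which in turn rests on the very continuity from above you are trying to prove. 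This is why \cite{HP09} and \cite{DHP11} argue in the opposite direction: they first exhibit $\mathcal{P}$ concretely as (the weak closure of) the laws of $\int_0^\cdot\theta_s\,dW_s$ for adapted $\theta$ with $\theta_s\theta_s^*\in\Gamma$, identify $\sup_\theta E_{P_\theta}[\varphi(B_{t_1},\dots,B_{t_n})]$ with the iterated $G$-heat equation solution by a dynamic programming / verification argument, and only then deduce the uniform Kolmogorov moment bounds, tightness, and the regularity of $\mathbb{E}$. If you wish to keep the Daniell--Stone architecture, replace your continuity-from-above step by this constructive identification on cylinder functions; at that point the abstract machinery is no longer needed for the statement at hand.
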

Let $\mathcal{P}$ be a weakly compact set that represents $\mathbb{E}$.
For this $\mathcal{P}$, we define capacity%
\[
c(A):=\sup_{P\in\mathcal{P}}P(A),\ A\in\mathcal{B}(\Omega_{T}).
\] $c$ defined here is independent of the choice of $\mathcal{P}$ (see Remark 2.7 in \cite{Song11} for details).
We say a set $A\subset\Omega_{T}$ is polar if $c(A)=0$. A property holds
\textquotedblleft quasi-surely\textquotedblright\ (q.s. for short) if it holds
outside a polar set.

\begin{definition}
A function $\eta (t,\omega ):[0,T]\times \Omega _{T}\rightarrow \mathbb{R}$
is called a step process if there exists a time partition $%
\{t_{i}\}_{i=0}^{n}$ with $0=t_{0}<t_{1}<\cdot \cdot \cdot <t_{n}=T$, such
that for each $k=0,1,\cdot \cdot \cdot ,n-1$ and $t\in (t_{k},t_{k+1}]$
\begin{equation*}
\eta (t,\omega )=\xi_{t_k}\in L_{ip}(\Omega_{t_k}).
\end{equation*}%
 We denote by $M^{0}(0,T)$ the
collection of all step processes.
\end{definition}

 For a step process $\eta \in
M^{0}(0,T) $, we set the norm
\begin{equation*}
\Vert \eta \Vert _{H_{G}^{p}}^{p}:=\mathbb{E}[\{ \int_{0}^{T}|\eta
_{s}|^{2}ds\}^{p/2}],\  \  \  \Vert \eta \Vert _{M_{G}^{p}}^{p}:=\mathbb{E}[\int_{0}^{T}|\eta _{s}|^{p}ds]\  \
\end{equation*}
and denote by $H_{G}^{p}(0,T)$ and $M_{G}^{p}(0,T)$ the completion of $%
M^{0}(0,T)$ with respect to the norms $\Vert \cdot \Vert _{H_{G}^{p}}$ and $%
\Vert \cdot \Vert _{M_{G}^{p}}$, respectively.

\begin {definition}

(i)We say that a map $\xi(\omega): \Omega_T\rightarrow \mathbb{R}$  is quasi-continuous if for all $\varepsilon>0$, there exists an open set
$G$ with $c(G)<\varepsilon$ such that $\xi(\cdot)$ is continuous
on $G^c$.

(ii) We say that a process $M_t(\omega): \Omega_T\times [0,T]\rightarrow \mathbb{R}$  is quasi-continuous if for all $\varepsilon>0$, there exists an open set
$G$ with $c(G)<\varepsilon$ such that $M_\cdot(\cdot)$ is continuous
on $G^c\times[0, T]$.
\end {definition}
\begin {remark}i) Different from  Wiener probability space, counterexamples can be given to show that not all   Borel measurable functions on $\Omega_T$ are $c$-quasi continuous.

ii) For $\eta\in M^1_G(0,T)$, it's easy to see that the process $\int_0^t\eta_s(\omega)ds$ has a $c$-quasi continuous version; Also, \cite {Song11} shows that any $G$-martingale has a $c$-quasi continuous version.
\end {remark}
\begin {theorem} (\cite{DHP11})  For $p\geq1$,
\[L^p_G(\Omega_T)=\{X\in L^0: X \ \textmd{has a} \ q.c. \ version, \
\lim_{n\rightarrow\infty}\mathbb{E}[|X|^p1_{\{|X|>n\}}]=0\},\] where
$L^0$ denotes the space of all $\mathbb{R}$-valued Borel measurable functions on
$\Omega_T$.
\end {theorem}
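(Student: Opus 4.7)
The plan is to prove the two inclusions in the claimed identity; write $\mathcal{L}^p$ for the set on the right-hand side. For the forward inclusion $L^p_G(\Omega_T)\subseteq\mathcal{L}^p$, note first that every $\xi\in L_{ip}(\Omega_T)$ is a bounded continuous cylinder functional, hence trivially quasi-continuous and trivially satisfies the tail condition. For a general $X\in L^p_G(\Omega_T)$, I would choose $\xi_n\in L_{ip}(\Omega_T)$ with $\|\xi_{n+1}-\xi_n\|_{L^p_G}\leq 2^{-n}$ and apply the capacity Chebyshev inequality $c(|\xi_{n+1}-\xi_n|>\varepsilon)\leq \varepsilon^{-p}\mathbb{E}[|\xi_{n+1}-\xi_n|^p]$. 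Taking $\varepsilon_n=2^{-n/2}$ yields the summable bound $2^{-np/2}$, so a Borel--Cantelli argument for capacity produces open sets $A_k=\bigcup_{n\geq k}\{|\xi_{n+1}-\xi_n|>2^{-n/2}\}$ with $c(A_k)\to 0$ on whose complements $\xi_n$ converges uniformly to a limit $\tilde X$. Each restriction $\tilde X|_{A_k^c}$ is continuous, so $\tilde X$ is quasi-continuous and equal to $X$ q.s. The tail condition for $\tilde X$ is then obtained from $\mathbb{E}[|X|^p 1_{\{|X|>n\}}]^{1/p}\leq \|X-\xi_N\|_{L^p_G}+\|\xi_N\|_\infty\, c(|X|>n)^{1/p}$, taking $N$ large and using that each $\xi_N$ is bounded and $c(|X|>n)\to 0$ by Chebyshev.

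For the reverse inclusion $\mathcal{L}^p\subseteq L^p_G(\Omega_T)$, the tail condition gives $\|X-((-n)\vee(X\wedge n))\|_{L^p_G}\to 0$, so it suffices to show that every bounded quasi-continuous $X$ lies in $L^p_G(\Omega_T)$. Given $\varepsilon>0$, pick an open $G\subset\Omega_T$ with $c(G)<\varepsilon$ such that $X|_{G^c}$ is continuous; by Tietze's theorem (applicable since $\Omega_T$ is Polish, hence normal) extend to a bounded continuous $Y:\Omega_T\to\mathbb{R}$ agreeing with $X$ on $G^c$ with $|Y|\leq\|X\|_\infty$, so that $\|X-Y\|_{L^p_G}^p\leq (2\|X\|_\infty)^p\,c(G)$ is small. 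Then I would approximate the bounded continuous path functional $Y$ by composing with the dyadic piecewise-linear interpolation map $\omega\mapsto\omega^N$ with knots $iT/2^N$, $i=0,\dots,2^N$, and mollifying the resulting continuous function on $(\mathbb{R}^d)^{2^N+1}$ to land in $C_{b,Lip}$. These cylinder approximants converge uniformly to $Y$ on compact subsets of $\Omega_T$, and the tightness of the representing family $\mathcal{P}$ supplied by Theorem \ref{the2.7} upgrades this uniform-on-compacts convergence to $L^p_G$-convergence.

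I expect the main obstacle to lie in the cylindrical approximation step at the end of the reverse inclusion: quasi-continuity is only a pointwise-on-$\Omega_T$ property, whereas membership in $L_{ip}(\Omega_T)$ imposes a finite-dimensional cylinder structure together with Lipschitz regularity, and it is not immediately clear that these are compatible. Bridging this gap hinges on the tightness of the representing set $\mathcal{P}$, which is itself the nontrivial content of Theorem \ref{the2.7}, and on the fact that the $G$-expectation of an indicator is controlled by $c$. The forward direction by contrast is essentially a soft capacity-Chebyshev plus Borel--Cantelli argument once one believes the basic probabilistic analogues of these tools survive the passage from a single probability measure to the sublinear expectation $\mathbb{E}$.
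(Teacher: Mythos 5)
The paper does not prove this statement---it is quoted verbatim from Denis--Hu--Peng (2011) with a citation and no argument---so the only comparison available is with the original proof there, and your proposal reconstructs essentially that proof: capacity Chebyshev plus Borel--Cantelli for the forward inclusion, and truncation, Tietze extension, and tightness-of-$\mathcal{P}$-driven cylindrical/Lipschitz approximation (the inclusion $C_b(\Omega_T)\subset L^p_G(\Omega_T)$) for the reverse. The outline is sound and you have correctly located the genuinely nontrivial step in the reverse inclusion; no gap to report.
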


\section{Gradient Estimates for Quasi-linear PDEs}
We consider the forward-backward stochastic differential equations  below
\begin {eqnarray}\label {pde-sl} & & X^x_t=x+\int_0^t\sigma(X^x_s)d B_s +\int_0^t b(X^x_s)ds, \\
                 \label {pde-sl} & & Y^x_t=\varphi(X^x_T)+\int_t^Tg(Y^x_s, Z^x_s)ds-\int_t^TZ^x_sd B_s,
\end {eqnarray} where $B$ is a $d$-dimensional standard Brownian motion. Set $u(T,x)=Y^x_0$.  By the generalized Feynman-Kac formula given in \cite {Peng1991} and \cite {PP92}, $u$ is the solution to the following PDE
\begin {eqnarray}  \partial_t u-\mathcal{L}u-g(u, \sigma^*Du)&=&0,\\
                                                         u(0,x)&=&\varphi(x),
\end {eqnarray}
where the generator $$\mathcal{L}f=\frac{1}{2}\mathrm{tr}[\sigma\sigma^*D^2f]+b\cdot Df.$$

\

Hypothesis ($\textbf{P}$).

(i) $\sigma: \mathbb{R}^d\rightarrow \mathbb{S}_d$, $b: \mathbb{R}^d\rightarrow \mathbb{R}^d$ are  Lipschitz continuous.
\begin {eqnarray*}& &  |\sigma(x)-\sigma(x')|\leq L_\sigma |x-x'|;\\
  & & |b(x)-b(x')|\leq L_b |x-x'|;
\end {eqnarray*}

(ii) There exists $\Lambda_\sigma\geq \lambda_\sigma>0$ such that
$\lambda_\sigma I \leq\sigma(x)\leq \Lambda_\sigma I$;

(iii) There exists $K_g, L_g>0$ such that $|g(y,z)-g(y',z')|\leq K_g|y-y'|+L_g|z-z'|$.

For convenience, we denote by
$\beta_\sigma:=\frac{\Lambda_\sigma}{\lambda_\sigma}$ and $g_0=g(0,0)$.

\

\

Below is the main result in this section.

\begin {theorem} \label {main1} Assume that Hypothesis (\textbf{P}) holds. Let $u(T,x)=Y^x_0$. There exists a constant $C>0$ depending on $\Lambda_\sigma, \lambda_\sigma, K_g, L_g, g_0$ such that
\[|u(T,x)-u(T,y)|\leq C(\|\varphi\|_\infty+|g_0|/\mu)\frac{e^{\mu T}}{\sqrt{(1-e^{-LT})/L}}|x-y|,\] where $\mu=K_g+4L_g^2, \ L=2(L_g L_\sigma+L_b+ 2 L_\sigma^2)$.
\end {theorem}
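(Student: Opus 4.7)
The approach is to couple the two forward diffusions via Wang's method from \cite{Wang11} and then transfer the resulting estimate through the backward SDE by standard stability techniques. Fix $x,y\in\mathbb{R}^d$. On a common probability space carrying a $d$-dimensional Brownian motion $B$, let $X^x$ solve the original forward SDE from $x$, and construct a companion process $\tilde{X}^y$ satisfying the same diffusion and drift coefficients from $y$ but augmented with an additional drift of the form $\pi(t)(X^x_t-\tilde X^y_t)$, where $\pi(t)$ is a deterministic profile. Using the uniform ellipticity of $\sigma$ together with the Lipschitz constants of $\sigma$ and $b$, I would choose $\pi$ so that $\tilde X^y_T=X^x_T$ almost surely, with the $L^2$-norm of the inserted drift being of order $|x-y|/\sqrt{(1-e^{-LT})/L}$ for $L=2(L_gL_\sigma+L_b+2L_\sigma^2)$; this is the classical Wang calibration. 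Girsanov's theorem then yields an equivalent measure $\mathbb{Q}$ under which $\tilde X^y$ has the law of the original forward SDE from $y$; writing $R_T=d\mathbb{Q}/dP$, one obtains $\|R_T-1\|_{L^2(P)}\leq C|x-y|/\sqrt{(1-e^{-LT})/L}$.

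With the coupling in hand, I would turn to the backward equation. Since $u(T,x)-u(T,y)=Y^x_0-Y^y_0$, expanding both BSDEs from $0$ to $T$ and using the coupling identity $X^x_T=\tilde X^y_T$ gives, after changing measure so that everything is written under $P$,
\[
Y^x_0-Y^y_0 \;=\; \mathbb{E}\bigl[(1-R_T)\varphi(X^x_T)\bigr]+\mathbb{E}\!\int_0^T\!\bigl(g(Y^x_s,Z^x_s)-R_s\,g(\tilde Y^y_s,\tilde Z^y_s)\bigr)\,ds.
\]
The terminal contribution is bounded by $\|\varphi\|_\infty\,\|R_T-1\|_{L^2(P)}$, delivering the required factor $\|\varphi\|_\infty\,|x-y|/\sqrt{(1-e^{-LT})/L}$.

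For the drift comparison I would rely on standard BSDE stability: the uniform bound $\|Y^x\|_\infty\leq\|\varphi\|_\infty+|g_0|/\mu$ is obtained by an exponential weight that absorbs the linear part of $g$; applying Young's inequality to the $L_g|Z|$ contribution in the Lipschitz hypothesis produces the term $4L_g^2$ added to $K_g$ in the Gronwall exponent, yielding the overall factor $e^{\mu T}$ with $\mu=K_g+4L_g^2$. Combining the two blocks produces the stated gradient bound with a constant $C$ depending only on $\Lambda_\sigma,\lambda_\sigma,K_g,L_g,g_0$.

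The main obstacle is the consistent bookkeeping of the Girsanov change of measure across the two BSDEs, since $(Y^x,Z^x)$ lives naturally under $P$ while $(\tilde Y^y,\tilde Z^y)$ lives under $\mathbb{Q}$. The Girsanov drift must be tracked through the generator $g$ and balanced so that the Lipschitz--Gronwall argument on the backward side multiplies cleanly with the $L^2$ control of $R_T-1$ from Wang's coupling, without introducing spurious $T$-dependent growth and while preserving the sharp constants $\mu$ and $L$ appearing in the final bound.
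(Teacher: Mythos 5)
Your overall strategy (Wang's coupling forcing $X^x_T=\tilde X^y_T$, then transferring the estimate through the BSDE) is the same as the paper's, but two steps of your sketch contain genuine gaps. First, you claim $\|R_T-1\|_{L^2(P)}\leq C|x-y|/\sqrt{(1-e^{-LT})/L}$. With multiplicative (non-constant) noise this is not available: the exponential moments of the coupling drift are only controlled up to a threshold determined by $\beta_\sigma=\Lambda_\sigma/\lambda_\sigma$, and the paper correspondingly only obtains $E[|U_T|^{1+\delta}]$ for the small exponent $\delta=\frac{1}{16\beta_\sigma^6+8\beta_\sigma^3}$ (Proposition \ref{esti-drift-l}). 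This is not a cosmetic difference; it forces the H\"older pairing of $|1-U_T|$ in $L^{1+\delta/2}$ against the BSDE quantities in $L^{(2+\delta)/\delta}$, which is why the $L^p$ estimates of \cite{BDHPS03} for large $p$ enter, and it also produces $O(|x-y|^2)$ correction terms that must be removed at the end via Lemma \ref{observation}. Your sketch contains none of this.

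Second, and more seriously, your treatment of the generator difference is circular. You isolate the term $\mathbb{E}\int_0^T(g(Y^x_s,Z^x_s)-R_s\,g(\tilde Y^y_s,\tilde Z^y_s))\,ds$ and appeal to ``standard BSDE stability,'' but the two BSDEs are driven by \emph{different} Brownian motions ($B$ and $\tilde B$), and the generator difference involves exactly the unknowns $\hat Y_s=Y^x_s-\tilde Y^y_s$, $\hat Z_s=Z^x_s-\tilde Z^y_s$ that one is trying to bound; a Gronwall argument on $\hat Y$ cannot be closed without an extra idea. The paper's device is to linearize: choose bounded adapted processes $k^0,\varepsilon^0$ (with $|k^0|\leq K_g$, $|\varepsilon^0|\leq L_g$) such that $\hat g(s)-k^0_s\hat Y_s-\varepsilon^0_s\hat Z_s\leq 0$, discount by $V_t=\exp\{\int_0^t k^0_s ds\}$, and absorb $\varepsilon^0_s\hat Z_s$ into a Girsanov shift $B^{\varepsilon^0}$. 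For this to be legitimate, the entire coupling and all drift estimates must be established uniformly over the whole family of measures $P^\varepsilon$ with $|\varepsilon_t|\leq L_g$ \emph{before} $\varepsilon^0$ is chosen (since $\varepsilon^0$ depends on the solutions), which is precisely why the paper sets up equations (\ref{l-equXr})--(\ref{l-equXh2r}) and Proposition \ref{esti-drift-l} relative to an arbitrary admissible $\varepsilon$. You correctly identify the ``bookkeeping of the Girsanov change across the two BSDEs'' as the main obstacle, but the proposal does not supply the mechanism that resolves it.
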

\begin {remark} The constant $C$ above can be chosen as $C=4C_5C_{\beta_\sigma}C_g\frac{\Lambda_\sigma^2}{\lambda_\sigma^3}$
with
$C_5:=(c_{\frac{5}{2}})^{\frac{2}{5}}(\frac{1}{5})^{\frac{1}{5}}(\frac{4}{5})^{\frac{4}{5}}$,
$C_{\beta_\sigma}=\frac{1}{\sqrt{2}}d_{32(\beta_\sigma^3+\frac{1}{4})^2}+1$,
$C_g=1+\frac{K_g}{L_g^2}$, where $c_p,
d_p$ are   constants depending only on  $p$ in Lemma \ref
{esti2-drif-l} and Lemma \ref {esti-bsde}, respectively.
\end {remark}

\begin {corollary} Assume that Hypothesis (\textbf{P}) holds. Let $u(T,x):=E[\varphi(X^x_T)]$.  There exists a constant $C>0$ depending on $\Lambda_\sigma, \lambda_\sigma$ such that
\[|u(T,x)-u(T,y)|\leq C\|\varphi\|_\infty\frac{1}{\sqrt{(1-e^{-LT})/L}}|x-y|,\] where $ L=2L_b+ 4 L_\sigma^2$.
\end {corollary}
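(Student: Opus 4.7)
The plan is to deduce the corollary directly from Theorem \ref{main1} by specializing the BSDE driver to $g\equiv 0$. With this choice the backward equation reads $Y^x_t=\varphi(X^x_T)-\int_t^T Z^x_s\,dB_s$; under Hypothesis (\textbf{P}) the stochastic integral is a true martingale (standard square-integrability from item (i) and boundedness of $\varphi$), so taking expectations gives $Y^x_0=E[\varphi(X^x_T)]$, which matches the corollary's definition of $u(T,x)$. Thus the corollary is an instance of Theorem \ref{main1}, provided we correctly read off the constants.

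Next, substitute $K_g=L_g=0$ and $g_0=g(0,0)=0$ into the formulas of the theorem. One obtains $\mu=K_g+4L_g^2=0$, so $e^{\mu T}=1$; the quantity $L=2(L_gL_\sigma+L_b+2L_\sigma^2)$ collapses to $2L_b+4L_\sigma^2$, exactly as stated; and the additive term $|g_0|/\mu$ appearing in $\|\varphi\|_\infty+|g_0|/\mu$ is to be understood as $0$, since it originates from the a priori sup-bound on the $Y$-component of the BSDE, which in the linear case degenerates to the trivial $|Y^x_t|\le\|\varphi\|_\infty$ obtained by taking conditional expectations of $\varphi(X^x_T)$. The theorem's bound therefore reduces precisely to
\[
|u(T,x)-u(T,y)|\le C\|\varphi\|_\infty\frac{1}{\sqrt{(1-e^{-LT})/L}}|x-y|.
\]

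The only point that requires a moment's care is the dependence structure of the constant. Per the Remark, the theorem's constant contains a factor $C_g=1+K_g/L_g^2$, which is formally indeterminate at $K_g=L_g=0$. The cleanest resolution is to rerun the proof of Theorem \ref{main1} with $g\equiv 0$ from the outset: the BSDE estimate (Lemma \ref{esti-bsde}) is invoked only to control $Y$ via $g$, and with $g\equiv 0$ this step disappears, being replaced by the trivial bound on $Y^x_t$ noted above. What survives is then simply the coupling estimate of Wang (2011) applied to the forward SDE, which yields a constant of the form $4C_5 C_{\beta_\sigma}\Lambda_\sigma^2/\lambda_\sigma^3$ depending only on $\Lambda_\sigma$ and $\lambda_\sigma$, as claimed. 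Since the substantive content of the corollary is already contained in the theorem, there is no real obstacle; the main item is this bookkeeping cleanup of the constant in the degenerate case $g\equiv 0$.
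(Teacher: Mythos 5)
Your proposal is correct and follows the same overall strategy as the paper: view the corollary as Theorem \ref{main1} specialized to $g\equiv 0$, after observing that $Y^x_0=E[\varphi(X^x_T)]$ in that case. The one substantive point of the proof --- that the constants $C_g=1+K_g/L_g^2$ and $|g_0|/\mu$ become indeterminate if one literally sets $K_g=L_g=0$ (which is anyway not allowed, since Hypothesis (\textbf{P})(iii) requires $K_g,L_g>0$) --- is handled differently. You propose to rerun the proof of the theorem with $g\equiv 0$, noting that the BSDE estimates for $Y$ and $Z$ are no longer needed; this works and would in fact give a slightly smaller constant, since the $Z$-term contributing the $\frac{1}{\sqrt 2}d_{(\delta+2)/\delta}$ part of $C_{\beta_\sigma}$ drops out, but it is only sketched and requires reopening the argument. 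The paper instead stays entirely within the statement of Theorem \ref{main1}: since $g\equiv 0$ satisfies the Lipschitz condition (iii) for \emph{every} choice of positive constants, one may take $K_g=0$, $g_0=0$ and $L_g=\frac1n$, so that $C_g=1$, $|g_0|/\mu=0$, $\mu=\frac{4}{n^2}$ and $L=2(\frac{L_\sigma}{n}+L_b+2L_\sigma^2)$ are all well defined, and then let $n\to\infty$ in the resulting bound. That limiting trick is the only content of the paper's proof; your route is a legitimate, if heavier, substitute for it.
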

\begin {proof} This is a spacial case of Theorem \ref {main1} with $g\equiv0$. So we can assume $g_0=0$, $K_g=0$ and $L_g=\frac{1}{n}$, which implies that $C_g=1$. So
\[|u(T,x)-u(T,y)|\leq 4C_5C_{\beta_\sigma}\frac{\Lambda_\sigma^2}{\lambda_\sigma^3}\|\varphi\|_\infty\frac{e^{\mu T}}{\sqrt{(1-e^{-LT})/L}}|x-y|,\] where $\mu=\frac{4}{n^2}, \ L=2(\frac{L_\sigma}{n}+L_b+ 2 L_\sigma^2)$. Letting $n$ go to infinity, we get the desired result.

\end {proof}

\subsection {Construction of the Coupling}

Let $\xi_t:=\frac{\alpha}{\alpha-1}\frac{\theta}{L}(1-e^{
L(t-T)})=:\frac{\alpha\theta}{\alpha-1}\xi^0_t$ for some
$\alpha\geq2$, where
\begin {eqnarray*}
\theta=\frac{\lambda^2_\sigma\Lambda^{-1}_\sigma}{2}, \ L&=&2
L^g_{\alpha/2}, \ L^g_{\alpha/2}= L_g L_\sigma+L_b+
\frac{\alpha-1}{2} L_\sigma^2.
\end {eqnarray*}
\subsubsection {the Coupling before time $T$}
 Consider the coupling below which is adapted from \cite {Wang11}
\begin {eqnarray}\label {l-equX} d X^x_t&=&\sigma(X^x_t)d B_t +b(X^x_t)dt, \ X^x_0=x;\\
                 \label {l-equXt}d \tilde{X}^y_t&=&\sigma(\tilde{X}^y_t)d B_t +b(\tilde{X}^y_t)dt+\frac{1}{\xi_t}\sigma(X^x_t)(X^x_t-\tilde{X}^y_t)dt, \  \tilde{X}^y_0=y.
\end {eqnarray}  Let $\varepsilon_t$ be an adapted measurable process such that $|\varepsilon_t|\leq L_g$. Set \[B^\varepsilon_t=B_t-\int_0^t\varepsilon_sds.\] Rewrite (\ref{l-equX}-\ref{l-equXt})
\begin {eqnarray}\label {l-equXr} d X^x_t&=&\sigma(X^x_t)d B^\varepsilon_t +b^\varepsilon(t,X^x_t)dt, \ X^x_0=x;\\
                 \label {l-equXtr}d \tilde{X}^y_t&=&\sigma(\tilde{X}^y_t)d B^\varepsilon_t +b^\varepsilon(t,\tilde{X}^y_t)dt+\frac{1}{\xi_t}\sigma(X^x_t)(X^x_t-\tilde{X}^y_t)dt, \  \tilde{X}^y_0=y,
\end {eqnarray} where $b^\varepsilon(t,x)=\sigma(x)\varepsilon_t+b(x)$.

Then $\hat{X}_t:=X^x_t-\tilde{X}^y_t$ satisfies the equation below
\begin {eqnarray}\label {l-equXhr} d \hat{X}_t= \hat{\sigma}(t)d B^\varepsilon_t +\hat{b}^\varepsilon(t)dt-
\frac{1}{\xi_t}\sigma(X^x_t)\hat{X}_t dt, \ \hat{X}_0=x-y.
\end {eqnarray} where $\hat{\sigma}(t)=\sigma(X^x_t)-\sigma(\tilde{X}^y_t)$, $\hat{b}^\varepsilon(t)=b^\varepsilon(t,X^x_t)-b^\varepsilon(t,\tilde{X}^y_t)$.

Set $\tilde{B}^\varepsilon_t=B^\varepsilon_t+\int_0^t\frac{\sigma(\tilde{X}^y_s)^{-1}\sigma(X^x_s)\hat{X}_s}{\xi_s}ds$. Rewrite equations (\ref {l-equXr}-\ref {l-equXhr}) as
\begin {eqnarray}
\label {l-equX2r} d X^x_t&=&\sigma(X^x_t)d \tilde{B}^\varepsilon_t +b^\varepsilon(t, X^x_t)dt-\frac{1}{\xi_t}\sigma(X^x_t)\sigma(\tilde{X}^y_t)^{-1}\sigma(X^x_t)\hat{X}_t dt, \ X^x_0=x;\\
\label {l-equXt2r}d \tilde{X}^y_t&=&\sigma(\tilde{X}^y_t)d \tilde{B}^\varepsilon_t +b^\varepsilon(t,\tilde{X}^y_t)dt, \  \tilde{X}^y_0=y;\\
\label {l-equXh2r} d \hat{X}_t&=&\hat{\sigma}(t)d \tilde{B}^\varepsilon_t +\hat{b}^\varepsilon(t)dt-\frac{1}{\xi_t}\sigma(X^x_t)\sigma(\tilde{X}^y_t)^{-1}\sigma(X^x_t)\hat{X}_t dt, \ \hat{X}_0=x-y.
\end {eqnarray}

By It\^o's formula, we have
\begin {eqnarray*}H_t:=|\hat{X}_t|^2=& &|x-y|^2+\int_0^t2\hat{\sigma}(s)^* \hat{X}_s\cdot d B^\varepsilon_s+\int_0^t2\hat{X}_s\cdot\hat{b}^\varepsilon(s)ds \\
& &-\int_0^t\frac{2\hat{X}_s^*\sigma(X^x_s)\hat{X}_s}{\xi_s}ds+\int_0^t \mathrm{tr}[\hat{\sigma}(s)^*\hat{\sigma}(s)]d s,\\
=& &|x-y|^2+\int_0^t2\hat{\sigma}(s)^* \hat{X}_s\cdot d \tilde{B}^\varepsilon_s+\int_0^t2\hat{X}_s\cdot\hat{b}^\varepsilon(s)ds \\
& & -\int_0^t\frac{2\hat{X}_s^*\sigma(X^x_t)\sigma(\tilde{X}^y_t)^{-1}\sigma(X^x_s)\hat{X}_s}{\xi_s}ds+\int_0^t \mathrm{tr}[\hat{\sigma}(s)^*\hat{\sigma}(s)]d s, \ t\in (0,T).
\end {eqnarray*} So, setting $L_{b^\varepsilon}:=L_gL_\sigma+L_b$, we have
\begin {eqnarray*} & & d H_t\leq 2\hat{\sigma}(t)^* \hat{X}_t\cdot d B^\varepsilon_t +(2L_{b^\varepsilon}+L_\sigma^2-\frac{2\lambda_\sigma}{\xi_t})H_tdt, \  t\in (0,T),\\
                   & & d H_t\leq 2\hat{\sigma}(t)^* \hat{X}_t\cdot d \tilde{B}^\varepsilon_t +(2L_{b^\varepsilon}+L_\sigma^2-\frac{2\lambda^2_\sigma \Lambda^{-1}_\sigma}{\xi_t})H_tdt, \ t\in (0,T),
\end {eqnarray*}
and, for $p\geq1$,
\begin {eqnarray*}
d H_t^p&\leq&2pH^{p-1}_t\hat{\sigma}(t)^* \hat{X}_t\cdot d B^\varepsilon_t+(2L_{b^\varepsilon}+(2p-1) L_\sigma^2-\frac{2\lambda_\sigma}{\xi_t})pH^p_t dt\\
       &=&:2pH^{p-1}_t\hat{\sigma}(t)^* \hat{X}_t\cdot d B^\varepsilon_t+(2L^g_p-\frac{2\lambda_\sigma}{\xi_t})p H^p_t dt.\\
d H_t^p&\leq&2pH^{p-1}_t\hat{\sigma}(t)^* \hat{X}_t\cdot d \tilde{B}^\varepsilon_t+(2L^g_p-\frac{2\lambda^2_\sigma\Lambda^{-1}_\sigma}{\xi_t})p H^p_t dt.
\end {eqnarray*} where $L^g_p=L_{b^\varepsilon}+(p-\frac{1}{2}) L_\sigma^2$. Then, setting $\tilde{H}_t=e^{2\varrho t}H_t$,
\begin {eqnarray}
 \label {lp-inequ-l}\frac{\tilde{H}^p_t}{\xi_t^{2p-1}}
&\leq&\frac{\tilde{H}^p_s}{\xi_s^{2p-1}}+\int_s^t\frac{2pe^{2p\varrho r}}{\xi^{2p-1}_r}H^{p-1}_r\hat{\sigma}(r)^* \hat{X}_r\cdot d B^\varepsilon_r\\
\label {lp-inequ2-l}& &+\int_s^t\frac{2p\tilde{H}^p_r}{\xi^{2p}_r}((\varrho+L^g_p)\xi_r-\lambda_\sigma-\frac{2p-1}{2p}\xi'_r)dr,\\
 \label {lp-INEQU-l}\frac{\tilde{H}^p_t}{\xi_t^{2p-1}}
&\leq&\frac{\tilde{H}^p_s}{\xi_s^{2p-1}}+\int_s^t\frac{2pe^{2p\varrho r}}{\xi^{2p-1}_r}H^{p-1}_r\hat{\sigma}(r)^* \hat{X}_r\cdot d \tilde{B}^\varepsilon_r\\
\label {lp-INEQU2-l} & &+\int_s^t\frac{2p\tilde{H}^p_r}{\xi^{2p}_r}((\varrho+L^g_p)\xi_r-
\frac{\lambda^2_\sigma}{\Lambda_\sigma}-\frac{2p-1}{2p}\xi'_r)dr.
\end {eqnarray}
\label {sec-l-1.2}\subsubsection {Extension of the Coupling to $T$}
Setting $\varrho=\frac{p-1}{p}L^g_p$, we have $\varrho+L^g_p=\frac{2p-1}{p}L^g_p$. By the definition of $\xi$, for any $\frac{\alpha}{2}\geq p\geq 1$, we have
\begin {eqnarray}\label {c1}\frac{2p-1}{p}L^g_p\xi_r-\lambda_\sigma-\frac{2p-1}{2p}\xi'_r\leq \frac{2p-1}{p}L^g_p\xi_r-\frac{\lambda^2_\sigma}{\Lambda_\sigma}-\frac{2p-1}{2p}\xi'_r\leq-\theta.
\end {eqnarray}
So, by (\ref {lp-inequ-l}-\ref {lp-inequ2-l}), we have
\begin {eqnarray*} E^\varepsilon[\int_0^t\frac{e^{2p\varrho r}|\hat{X}_r|^{2p}}{\xi^{2p}_r}dr]\leq \frac{1}{2p\theta} \frac{|x-y|^{2p}    }{\xi_{0}^{2p-1}},
\end {eqnarray*} where $E^\varepsilon$ is the expectation under $P^\varepsilon$ with $\frac{dP^\varepsilon}{dP}:=e^{\int_0^T\varepsilon_sdB_s-\frac{1}{2}\int_0^T|\varepsilon_s|^2ds}$.

Actually, for any $p\geq 1$, there exists $s(p)\in(0,T)$ such that
$\frac{2p-1}{p}L^g_p\xi_r-\lambda_\sigma-\frac{2p-1}{2p}\xi'_r\leq-\frac{\theta}{2p}$
for any $s\in [s(p),T)$. So
$$E^\varepsilon[\int_{s(p)}^t\frac{e^{2p\varrho r}|\hat{X}_r|^{2p}}{\xi^{2p}_r}]dr\leq
\frac{1}{\theta} E^\varepsilon[\frac{|e^{2p\varrho s(p)}\hat{X}_{s(p)}|^{2p}
}{\xi_{s(p)}^{2p-1}}]<\infty.$$ Consequently, for any $p\geq1$,
there exists $C(p)>0$ such that
    $$E^\varepsilon[\int_0^t\frac{|\hat{X}_r|^{p}}{\xi^{p}_r}]dr\leq C(p).$$
 So there exists a $\mathbb{R}^d$-valued adapted measurable process $\{g_t\}_{t\in[0,T]}$ such that
\begin {eqnarray}\label {Drift} E^\varepsilon[\int_0^T|g_s|^pds]<\infty \ \textmd{and}  \ g_s=\frac{1}{\xi_s}(X^x_s-\tilde{X}^y_s), \ s\in[0,T).
 \end {eqnarray}

 Let $(\bar{X}_t)_{t\in[0,T]}$ be the solution to the equation below
\begin {eqnarray}
                 \label {l-equXte}d \bar{X}_t&=&\sigma(\bar{X}_t)d B_t +b(\bar{X}_t)dt+\sigma(X^x_t)g_t dt, \  \bar{X}_0=y.
\end {eqnarray} Clearly, we have $\tilde{X}^y_t=\bar{X}_t$, $t\in[0,T)$. So $\bar{X}$ is a continuous extension of $\tilde{X}^y$ to $[0,T]$. In the sequel, we shall still write $\tilde{X}^y$ for $\bar{X}$.

\begin {proposition} \label {extention} Let $X^x$ and $\tilde{X}^y$ be the solutions to equations (\ref {l-equX}) and (\ref {l-equXte}). We have $X^x_T=\tilde{X}^y_T$.
\end {proposition}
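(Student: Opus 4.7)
The strategy is to exploit the already-derived uniform estimate
\[E^\varepsilon\Bigl[\int_0^t \frac{|\hat X_r|^{2}}{\xi_r^{2}}\,dr\Bigr]\le \frac{|x-y|^{2}}{2\theta\,\xi_0},\quad t\in[0,T),\]
which is the case $p=1$ of (\ref{lp-inequ-l})--(\ref{lp-inequ2-l}) combined with (\ref{c1}), to force the discrepancy $\hat X_t = X^x_t-\tilde X^y_t$ to vanish at $T$. The mechanism is that $\xi_t$ vanishes linearly as $t\uparrow T$: from $1-e^{-x}\le x$ one obtains $\xi_t\le \tfrac{\alpha\theta}{\alpha-1}(T-t)$, so the weight $1/\xi_r^{2}$ is non-integrable near $T$, and finiteness of the weighted $L^{2}$-norm above must be compensated by $|\hat X_r|\to 0$.

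The plan then has three steps. First, the bound above is uniform in $t\in[0,T)$, so monotone convergence as $t\uparrow T$ yields $\int_0^T |\hat X_r|^2/\xi_r^2\,dr<\infty$ $P^\varepsilon$-a.s.; since $\varepsilon$ is bounded, Girsanov makes $P^\varepsilon\sim P$ and the same holds $P$-a.s. Second, $\hat X$ is continuous on the closed interval $[0,T]$: $X^x$ is continuous by (\ref{l-equX}), and $\tilde X^y$ has been continuously extended to $T$ by the process $\bar X$ solving (\ref{l-equXte}), whose drift is pathwise integrable thanks to (\ref{Drift}). Third, I would argue by contradiction: if $\hat X_T\ne 0$ on a set of positive probability, then on that event continuity at $T$ supplies $\delta(\omega),c(\omega)>0$ with $|\hat X_r(\omega)|\ge c$ for all $r\in[T-\delta,T]$, whence
\[\int_0^T\frac{|\hat X_r(\omega)|^2}{\xi_r^2}\,dr\ \ge\ c^2\Bigl(\tfrac{\alpha-1}{\alpha\theta}\Bigr)^2\int_{T-\delta}^T\frac{dr}{(T-r)^2}=+\infty,\]
contradicting the a.s.\ finiteness just established. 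Hence $\hat X_T=0$ $P$-a.s., i.e.\ $X^x_T=\tilde X^y_T$.

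There is no real obstacle beyond bookkeeping; the estimates of Section \ref{sec-l-1.2} are already doing all the analytic work. The two points worth a brief check are (i) the monotone-convergence passage from the $t<T$ bound to an integral over $[0,T]$, which is immediate since the integrand is non-negative and the bound is $t$-independent, and (ii) well-posedness of (\ref{l-equXte}) on the closed interval $[0,T]$, which is guaranteed by the integrability (\ref{Drift}) of the coupling drift.
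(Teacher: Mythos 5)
Your proposal is correct and follows essentially the same route as the paper: both rest on the a.s.\ finiteness of the weighted integral $\int_0^T |\hat X_s|^p/\xi_s^p\,ds$ (guaranteed by the expectation bound from Section 3.1.2) combined with the fact that $\xi_s$ vanishes linearly at $T$, so that a nonzero limit $\hat X_T$ would force the integral to diverge by continuity. You merely spell out the steps (monotone convergence, the equivalence $P^\varepsilon\sim P$, the bound $\xi_t\le\tfrac{\alpha\theta}{\alpha-1}(T-t)$) that the paper's terser proof leaves implicit.
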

\begin {proof} For $\omega$ such that $X^x_T(\omega)\neq \tilde{X}^y_T(\omega)$, we have $$\int_0^T|g_s(\omega)|^pds=\int_0^T\frac{1}{\xi^p_s}|X^x_s(\omega)-\tilde{X}^y_s(\omega)|^pds=\infty.$$ Noting that  $E^\varepsilon[\int_0^T|g_s|^pds]\leq C(p),$ we conclude that $X^x_T=\tilde{X}^y_T$ a.s.
\end {proof}

\subsubsection{Estimates for the Drift}

Set $h_s=-\frac{1}{\xi_s}\sigma(\tilde{X}^y_s)^{-1}\sigma(X^x_s)(X^x_s-\tilde{X}^y_s)$. Choose a sequence of  stopping times $\tau_n$ such that $\tau_n\uparrow T$, $\tau_n\leq T-\frac{1}{n}$ and $h^n_s:=h_s1_{[0,\tau_n]}$ is  bounded. Denote by $\tilde{E}^n$ the expectation under the probability $\tilde{P}^n$ with $\frac{d\tilde{P}^n}{dP^\varepsilon}=e^{\int_0^Th^ndB^\varepsilon_s-\frac{1}{2}\int_0^T|h^n_s|^2ds}=: U^{h^n}_T$.

\begin {proposition}\label {esti-drift-l} \[\widetilde{E}^{n}[\exp\{\frac{\theta^2}{8\Lambda_\sigma^2}\int_0^{\tau_n}\frac{|\hat{X}_s|^2}{\xi_s^2}ds\}]
\leq \exp\{\frac{\theta}{8\Lambda_\sigma^2\xi_0}|x-y|^2\}.\]
\[ E^\varepsilon[| U^{h^n}_T|^{1+\delta}]\leq \exp\{\frac{\theta\sqrt{1+\delta^{-1}}}{8\Lambda_\sigma^2\xi_0(1+\sqrt{1+\delta^{-1}})}|x-y|^2\},\]
 where $\delta:=\frac{\theta^2}{4\Lambda_\sigma^2\beta_\sigma^2+4\theta \Lambda_\sigma\beta_\sigma}=\frac{1}{16\beta_\sigma^6+8\beta_\sigma^3}$.
\end {proposition}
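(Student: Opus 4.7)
The plan is to prove the two inequalities in sequence: first an exponential moment estimate under $\widetilde{P}^n$, and then derive the density moment under $P^\varepsilon$ from it via a single H\"older inequality.

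For the first inequality I would look for an exponential $\widetilde{P}^n$-supermartingale of the form
\[ M_t := \exp\!\Bigl(\mu\,H_t/\xi_t + \lambda\!\int_0^t H_s/\xi_s^2\,ds\Bigr), \qquad H_t := |\hat{X}_t|^2. \]
Setting $Y_t:=H_t/\xi_t$, inequality (\ref{lp-INEQU2-l}) specialized to $p=1$, $\varrho=0$ yields $dY_t \leq \tfrac{2}{\xi_t}\hat{\sigma}(t)^{*}\hat{X}_t\cdot d\widetilde{B}^\varepsilon_t - \tfrac{2\theta H_t}{\xi_t^2}\,dt$. The decisive step is to bound the quadratic variation by the \emph{ellipticity} estimate $|\hat{\sigma}(t)|_{\mathrm{op}}\leq 2\Lambda_\sigma$ coming from $\sigma\leq\Lambda_\sigma I$, rather than by the Lipschitz bound: this produces the linear estimate $d\langle Y\rangle_t \leq 16\Lambda_\sigma^2\,H_t/\xi_t^2\,dt$, while the Lipschitz bound $L_\sigma^2 H_t^2$ would give a quadratic term that the argument cannot absorb. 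It\^o's formula then gives
\[ dM_t \leq dN_t + M_t\,\tfrac{H_t}{\xi_t^2}\bigl(\lambda - 2\mu\theta + 8\mu^2\Lambda_\sigma^2\bigr)dt \]
for a local martingale $N_t$, so the drift is non-positive iff $\lambda \leq 2\mu\theta - 8\mu^2\Lambda_\sigma^2$. Maximizing this upper bound over $\mu>0$ gives the unique pair $\mu=\theta/(8\Lambda_\sigma^2)$, $\lambda=\theta^2/(8\Lambda_\sigma^2)$. For these values $M_{\cdot\wedge\tau_n}$ is a (localized) $\widetilde{P}^n$-super\-martingale, hence $\widetilde{E}^n[M_{\tau_n}] \leq M_0 = \exp(\theta|x-y|^2/(8\Lambda_\sigma^2\xi_0))$; dropping the non-negative factor $\exp(\mu Y_{\tau_n})\geq 1$ yields the first inequality.

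For the second inequality I would first convert to a $\widetilde{P}^n$-expectation via $E^\varepsilon[(U^{h^n}_T)^{1+\delta}] = \widetilde{E}^n[(U^{h^n}_T)^\delta]$, and then, using $dB^\varepsilon = d\widetilde{B}^\varepsilon + h^n\,ds$ under $\widetilde{P}^n$, rewrite, for every $p>1$ with conjugate $q=p/(p-1)$,
\[ (U^{h^n}_T)^\delta = \mathcal{E}(p\delta h^n\!\cdot\!\widetilde{B}^\varepsilon)^{1/p}\,\exp\!\Bigl(\tfrac{\delta(p\delta+1)}{2}\!\!\int_0^T\!|h^n_s|^2\,ds\Bigr). \]
H\"older's inequality, the martingale identity $\widetilde{E}^n[\mathcal{E}(p\delta h^n\!\cdot\!\widetilde{B}^\varepsilon)]=1$ (valid since $h^n$ is bounded), and the pointwise bound $|h^n|^2\leq\beta_\sigma^2 H/\xi^2$ then give
\[ \widetilde{E}^n[(U^{h^n}_T)^\delta] \leq \Bigl(\widetilde{E}^n\Bigl[\exp\!\Bigl(\tfrac{q\delta(p\delta+1)\beta_\sigma^2}{2}\!\!\int_0^{\tau_n}\!H_s/\xi_s^2\,ds\Bigr)\Bigr]\Bigr)^{\!1/q}. \]
Writing $s:=\sqrt{1+\delta^{-1}}$, the choice $p=1+s$ (hence $q=(s+1)/s$) is tailored so that $(s+1)\delta = 1/(s-1)$, reducing the coefficient inside the exponent to $\beta_\sigma^2/\bigl(2(s-1)^2\bigr)$; this equals $\theta^2/(8\Lambda_\sigma^2)$ precisely because the defining relation $4\Lambda_\sigma\beta_\sigma(\Lambda_\sigma\beta_\sigma+\theta)\delta=\theta^2$ forces $s-1 = 2\Lambda_\sigma\beta_\sigma/\theta$. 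Applying the first part provides the inner bound $\exp(\theta|x-y|^2/(8\Lambda_\sigma^2\xi_0))$, and raising to the power $1/q = s/(s+1)$ yields the stated constant.

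The main obstacle is the algebraic matching of $p$, $q$, and $\delta$ so that the coefficient produced by H\"older exactly saturates the critical constant $\theta^2/(8\Lambda_\sigma^2)$ of Part 1; conceptually, $\delta$ is defined to be the largest value for which $p=1+\sqrt{1+\delta^{-1}}$ keeps the H\"older exponent within the admissible range of the first bound. The remainder is standard: a localization argument for the supermartingale in Part 1, and careful bookkeeping in Part 2.
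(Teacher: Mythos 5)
Your proposal is correct, and the two halves relate to the paper's proof differently. For the second inequality you follow essentially the same route as the paper: convert to $\widetilde{E}^n[(U^{h^n}_T)^{\delta}]$, split off a stochastic exponential by H\"older, use that its $\widetilde{P}^n$-expectation is $1$, bound $|h^n|^2\leq\beta_\sigma^2|\hat X|^2/\xi^2$, and choose the conjugate exponent $1+\sqrt{1+\delta^{-1}}$ so that the resulting coefficient collapses to $\theta^2/(8\Lambda_\sigma^2)$; your algebra via $s-1=2\Lambda_\sigma\beta_\sigma/\theta$ is a correct (and arguably cleaner) rewriting of the paper's identity $\frac{\delta q(\delta q+1)}{2(q-1)}=\frac{(\delta+\sqrt{\delta^2+\delta})^2}{2}$. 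For the first inequality you genuinely diverge: the paper exponentiates the pathwise bound $\int_0^{\tau_n}|\hat X_r|^2\xi_r^{-2}\,dr\leq\frac{|x-y|^2}{2\theta\xi_0}+\frac{1}{\theta}\int_0^{\tau_n}\xi_r^{-1}\hat\sigma(r)^*\hat X_r\cdot d\tilde B^n_r$ and closes a Cauchy--Schwarz self-bounding loop $I\leq C\,I^{1/2}$ at the critical value $a=\theta^2/(8\Lambda_\sigma^2)$ (which requires the a priori finiteness of $I$ supplied by the boundedness of $h^n$ on $[0,\tau_n]$), whereas you build the exponential supermartingale $\exp\{\mu H_t/\xi_t+\lambda\int_0^t H_s\xi_s^{-2}ds\}$ and optimize $(\mu,\lambda)$; both use the same ellipticity bound $|\hat\sigma^*\hat X|^2\leq 4\Lambda_\sigma^2 H$ and land on the same constant, but your version avoids the self-referencing step and additionally records the initial-condition factor $\exp\{\mu|x-y|^2/\xi_0\}$ directly. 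Two small notational points: under $\widetilde{P}^n$ the driving Brownian motion is $\tilde B^n$, not $\tilde B^\varepsilon$ (harmless here since $h^n(h-h^n)\equiv 0$), and the justification that It\^o's formula applies is that $Y_t=H_t/\xi_t$ is an exact semimartingale whose finite-variation part is dominated by $-2\theta H_t\xi_t^{-2}dt$, so the inequality only enters through the drift.
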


The estimates above are from Lemma 2.2 in \cite {Wang11}. For readers' convenience, we give the sketch of the proof.

\begin {proof} For $p=1$, (\ref {lp-INEQU-l}-\ref {lp-INEQU2-l}) with $\varrho=0$ shows that
\[\int_0^t\frac{|\hat{X}_r|^2}{\xi_r^2}dr\leq \frac{|x-y|^2}{2\theta\xi_0}+\int_0^t\frac{1}{\theta\xi_r}\hat{\sigma}(r)^* \hat{X}_r\cdot d \tilde{B}^\varepsilon_r.\]
Set $\tilde{B}^n_t:=B^\varepsilon_t-\int_0^th^n_sds$. By Girsanov transformation, we know that $\tilde{B}^n_t$ is a standard Brownian motion under $\tilde{E}^{n}$. Noting that\[\int_0^{\tau_n}\frac{|\hat{X}_r|^2}{\xi_r^2}dr\leq \frac{|x-y|^2}{2\theta\xi_0}+\int_0^{\tau_n}\frac{1}{\theta\xi_r}\hat{\sigma}(r)^* \hat{X}_r\cdot d \tilde{B}^n_r,\] we get
\[\tilde{E}^{n}[\exp\{a\int_0^{\tau_n}\frac{|\hat{X}_r|^2}{\xi_r^2}dr\}]\leq \exp\{\frac{a|x-y|^2}{2\theta\xi_0}\} (\tilde{E}^{n}[\exp\{\frac{8a^2\Lambda_\sigma^2}{\theta^2}\int_0^{\tau_n}\frac{|\hat{X}_r|^2}{\xi_r^2}dr\}])^{1/2} .\] Taking $a=\frac{\theta^2}{8\Lambda_\sigma^2}$, we get the first estimate.

By the definition of $ U^{h^n}_T$, we have
\begin {eqnarray*}E^\varepsilon[| U^{h^n}_T|^{1+\delta}]=\tilde{E}^{n}[\exp\{\delta\int_{0}^{T}h^n_{s}\cdot dB^\varepsilon_{s}-\frac{\delta}{2}\int_{0}^{T}|h^n_s|^2
ds\}].
\end {eqnarray*}Noting that $M_t:=\int_{0}^{t} h^n_{s}\cdot dB^\varepsilon_{s}-\int_{0}^{t}|h^n_s|^2ds$ is a  martingale under $\tilde{P}^{n}$, we have, for any $q>1$,
\begin {eqnarray*}E^\varepsilon[| U^{h^n}_{T}|^{1+\delta}]
&=&\tilde{E}^{n}[\exp \{\delta M_T+\frac{\delta}{2}\langle M\rangle_T\}]\\
&\leq&(\tilde{E}^{n}[\exp\{\frac{\delta q(\delta q+1)}{2(q-1)}\langle M\rangle_T\}])^{(q-1)/q}\\
&\leq&(\tilde{E}^{n}[\exp\{\frac{\delta q(\delta q+1)}{2(q-1)}\beta_\sigma^2\int_0^{\tau_n}\frac{|\hat{X}_s|^2}{\xi_s^2}ds\}])^{(q-1)/q}.
\end {eqnarray*} Taking $q=1+\sqrt{1+\delta^{-1}}$, we get
$\frac{\delta q(\delta q+1)}{2(q-1)}\beta_\sigma^2=\frac{(\delta+\sqrt{\delta^2+\delta})^2\beta_\sigma^2}{2}
=\frac{\theta^2}{8\Lambda_\sigma^2}$. By the first estimate, we get the second desired result.
\end {proof}

So for any adapted measurable process $\varepsilon$ with $|\varepsilon_t|\leq L_g$,
\[\tilde{P}^\varepsilon:= U_T.P^\varepsilon, \ \textmd{with} \ U_T:=\exp\{\int_0^Th_sdB^\varepsilon_s-\frac{1}{2}\int_0^T|h_s|^2ds\}\] is a probability, under which $\tilde{B}^\varepsilon_t=B^\varepsilon_t-\int_0^th_sds$ is a standard Brownian motion. We write $\tilde{P}, \tilde{B}_t$ for $\tilde{P}^0, \tilde{B}^0_t$.

Let $u_t=\int_0^Th_sdB^\varepsilon_s-\frac{1}{2}\int_0^T|h_s|^2ds$.
\begin {lemma}\label {esti2-drif-l}  We have the following
estimates:
\begin {eqnarray}\label {esti-u}& &(E^\varepsilon[|u_T|^{\frac{\alpha}{2}}])^{\frac{2}{\alpha}}\leq C_\alpha\frac{2\Lambda_\sigma^2}{\lambda_\sigma^3}
\frac{|x-y|}{\sqrt{\xi^0_0}}+ O(|x-y|^2),\\
\label {esti-ut}    & &
(\tilde{E}^\varepsilon[|u_T|^{\frac{\alpha}{2}}])^{\frac{2}{\alpha}}\leq
C_\alpha\frac{2\Lambda_\sigma^2}{\lambda_\sigma^3} \frac{|x-y|}{\sqrt{\xi^0_0}}+ O(|x-y|^2),
\end {eqnarray}where $C_\alpha:=(c_{\frac{\alpha}{2}})^{\frac{2}{\alpha}}(\frac{1}{\alpha})^{\frac{1}{\alpha}}(\frac{1}{\alpha^*})^{\frac{1}{\alpha*}}$ and $c_p$ is the   constant for BDG inequalities.
\end {lemma}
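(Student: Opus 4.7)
The plan is to decompose
\[
u_T = I_T - \tfrac{1}{2}[h]_T, \qquad I_T := \int_0^T h_s\cdot dB^\varepsilon_s, \qquad [h]_T := \int_0^T |h_s|^2\,ds,
\]
bound the two summands separately via Burkholder--Davis--Gundy (BDG) together with the $L^p$ control on $|\hat X_s|/\xi_s$ already supplied by (\ref{lp-inequ-l})--(\ref{lp-inequ2-l}), and combine them by Minkowski in $L^{\alpha/2}$. Because $\|\sigma(\tilde X^y_s)^{-1}\sigma(X^x_s)\|\leq \beta_\sigma$, one has $|h_s|^2 \leq \beta_\sigma^2\,|\hat X_s|^2/\xi_s^2$, so the entire problem reduces to sharp $L^p$ estimates for $A_T := \int_0^T |\hat X_s|^2/\xi_s^2\,ds$.

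For the drift summand, Jensen on the time integral combined with (\ref{lp-inequ-l}) at $p=\alpha/2$ gives $\|[h]_T\|_{\alpha/2} = O(|x-y|^2)$, which is absorbed into the stated remainder. For the stochastic integral, BDG with sharp constant $c_{\alpha/2}$ yields
\[
\|I_T\|_{\alpha/2}\ \leq\ c_{\alpha/2}^{2/\alpha}\,\|[h]_T\|_{\alpha/4}^{1/2}\ \leq\ c_{\alpha/2}^{2/\alpha}\,\beta_\sigma\,\|A_T\|_{\alpha/4}^{1/2}.
\]
To estimate $\|A_T\|_{\alpha/4}$ I would specialise (\ref{lp-inequ-l}) to $p=1$, $\varrho=0$, use (\ref{c1}) to flip the drift sign into $-\theta$, and integrate to extract the pathwise inequality
\[
2\theta A_T\ \leq\ \frac{|x-y|^2}{\xi_0} + N_T, \qquad N_T := \int_0^T \frac{2\,\hat\sigma(s)^*\hat X_s}{\xi_s}\cdot dB^\varepsilon_s.
\]
The Lipschitz bound on $\sigma$ then gives $\langle N\rangle_T \leq 4L_\sigma^2(\sup_{s\leq T}|\hat X_s|^2)\,A_T$; applying BDG to $N_T$ and Cauchy--Schwarz to decouple $\sup|\hat X|^2$ from $A_T$ produces a quadratic self-bounding inequality for $\|A_T\|_{\alpha/4}$. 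A standard Gronwall argument on (\ref{l-equXhr}), exploiting the stabilising drift $-\sigma(X^x)\hat X/\xi_t$, gives $\|\sup_{s\leq T}|\hat X_s|^2\|_{\alpha/4}\leq C|x-y|^2$; solving the self-bounding inequality by a Young inequality with conjugate exponents $\alpha$ and $\alpha^* := \alpha/(\alpha-1)$, chosen at the optimal split that produces precisely the factor $(1/\alpha)^{1/\alpha}(1/\alpha^*)^{1/\alpha^*}$ appearing in $C_\alpha$, delivers $\|A_T\|_{\alpha/4}^{1/2} \leq |x-y|/\sqrt{2\theta\xi_0} + O(|x-y|^2)$. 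Multiplying by $c_{\alpha/2}^{2/\alpha}\beta_\sigma$ and using the identifications $\beta_\sigma/\theta = 2\Lambda_\sigma^2/\lambda_\sigma^3$, $\xi_0 = \tfrac{\alpha\theta}{\alpha-1}\xi^0_0$ then produces (\ref{esti-u}).

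The second estimate (\ref{esti-ut}) follows by rerunning the same argument under $\tilde P^\varepsilon$: since $dB^\varepsilon_s = d\tilde B^\varepsilon_s + h_s\,ds$, we have $u_T = \int_0^T h_s\cdot d\tilde B^\varepsilon_s + \tfrac{1}{2}[h]_T$, and $\tilde B^\varepsilon$ is a $\tilde P^\varepsilon$-Brownian motion, so BDG applies verbatim; the companion inequalities (\ref{lp-INEQU-l})--(\ref{lp-INEQU2-l}) supply the same pathwise bound on $A_T$ under $\tilde P^\varepsilon$, with $\lambda_\sigma$ replaced by $\lambda_\sigma^2/\Lambda_\sigma$, a discrepancy already absorbed in the $2\Lambda_\sigma^2/\lambda_\sigma^3$ prefactor. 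The main difficulty is bookkeeping: carefully tracking the BDG constant, the Cauchy--Schwarz step and the Young split so that the three-factor constant $C_\alpha = c_{\alpha/2}^{2/\alpha}(1/\alpha)^{1/\alpha}(1/\alpha^*)^{1/\alpha^*}$ emerges in exactly the advertised product form; the underlying probabilistic content (BDG plus a linear SDE moment bound on $\hat X$) is routine.
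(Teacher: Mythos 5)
Your overall skeleton (Minkowski on $u_T=\int_0^Th_s\cdot dB^\varepsilon_s-\tfrac12\int_0^T|h_s|^2ds$, BDG on the martingale part, the quadratic-variation part absorbed into $O(|x-y|^2)$, and the reduction $|h_s|\le\beta_\sigma|\hat X_s|/\xi_s$) is exactly the paper's, and your treatment of (\ref{esti-ut}) via $dB^\varepsilon=d\tilde B^\varepsilon+h\,ds$ is also fine. The divergence, and the gap, is in how you estimate $\big\|\int_0^T|h_s|^2ds\big\|$. The paper does \emph{not} go back to the $p=1$, $\varrho=0$ pathwise inequality; it uses the whole family of weighted moment bounds (\ref{lp-esti-l}) at $p=\alpha/2$ with $\varrho=\frac{\alpha-2}{\alpha}L^g_{\alpha/2}$, i.e. $E^\varepsilon[\int_0^Te^{\alpha\varrho r}|h_r|^{\alpha}dr]\le\frac{\beta_\sigma^\alpha}{\alpha\theta}\frac{|x-y|^\alpha}{\xi_0^{\alpha-1}}$, and then applies H\"older \emph{in time} to $\int_0^Te^{-2\varrho s}\cdot e^{2\varrho s}|h_s|^2ds$ with exponents $\alpha/2$ and $\alpha/(\alpha-2)$. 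Since $2\varrho\cdot\frac{\alpha}{\alpha-2}=L$, the deterministic factor is $(\int_0^Te^{-Ls}ds)^{\frac{\alpha-2}{2\alpha}}=(\xi_0/(\alpha^\ast\theta))^{\frac{\alpha-2}{2\alpha}}$, and it is precisely the product of this with the $(\alpha\theta)^{-1/\alpha}\xi_0^{-(\alpha-1)/\alpha}$ coming from the moment bound that collapses to $(\tfrac1\alpha)^{1/\alpha}(\tfrac{1}{\alpha^\ast})^{1/\alpha^\ast}\frac{\beta_\sigma}{\theta}\frac{1}{\sqrt{\xi^0_0}}$. Your plan contains no mechanism that could generate this factor; asserting that a Young split in a self-bounding inequality will "produce precisely" it is not an argument.

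More seriously, the step where you solve the quadratic self-bounding inequality is wrong as stated. From $2\theta A_T\le\frac{|x-y|^2}{\xi_0}+N_T$, BDG on $N_T$ and Cauchy--Schwarz you get an inequality of the form $z^2\le a+bz$ with $z=\|A_T\|_{\alpha/4}^{1/2}$, $a=\frac{|x-y|^2}{2\theta\xi_0}$ and $b=C'\,\|\sup_s|\hat X_s|^2\|^{1/2}\asymp C'|x-y|$. Solving gives $z\le\sqrt a+b$, i.e. an additive error of order $|x-y|$, \emph{not} $O(|x-y|^2)$ as you claim. That error feeds into the coefficient of the linear term, so Lemma \ref{observation} would then see a derivative at $0$ equal to the advertised constant \emph{plus} an uncontrolled additive contribution (involving BDG constants, $L_\sigma$, and your Gronwall constant), and the lemma with the stated constant $C_\alpha\frac{2\Lambda_\sigma^2}{\lambda_\sigma^3}\frac{1}{\sqrt{\xi^0_0}}$ — on which the explicit constant in Theorem \ref{main1} rests — is not recovered. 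To repair the argument you should drop the detour through $N_T$ and $\sup_s|\hat X_s|$ entirely and instead take the $L^{\alpha/2}$-in-$\omega$, $L^{\alpha}$-in-time bound on $e^{\varrho r}h_r$ directly from (\ref{lp-inequ-l})--(\ref{lp-inequ2-l}) with (\ref{c1}), as the paper does; then the only $O(|x-y|)$ term is the BDG term and it carries exactly the claimed constant.
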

\begin {proof}Set $\varrho=\frac{p-1}{p}L^g_p$ for $p\leq\frac{\alpha}{2}$. By (\ref {lp-inequ-l}), (\ref {lp-INEQU-l}) and (\ref {c1}),
\begin {eqnarray}\label {lp-esti-l} & &E^\varepsilon[\int_0^T\frac{e^{2p\varrho r}|\hat{X}_r|^{2p}}{\xi^{2p}_r}dr]\leq \frac{1}{2p\theta} \frac{|x-y|^{2p}    }{\xi_{0}^{2p-1}},\\
& &\label {lp-esti-lt}\tilde{E}^\varepsilon[\int_0^T\frac{e^{2p\varrho r}|\hat{X}_r|^{2p}}{\xi^{2p}_r}dr]\leq \frac{1}{2p\theta} \frac{|x-y|^{2p}    }{\xi_{0}^{2p-1}}.
\end {eqnarray} So $E^\varepsilon[\int_0^Te^{2p\varrho r}|h_r|^{2p}]dr\leq \frac{\beta_\sigma^{2p}}{2p\theta} \frac{|x-y|^{2p}}{\xi_{0}^{2p-1}}.$
Then
\begin {eqnarray*} (E^\varepsilon[|u_T|^p])^{1/p}\leq& & (E^\varepsilon[|\int_0^Th_sdB^\varepsilon_s|^p])^{1/p}+\frac{1}{2}(E[(\int_0^T|h_s|^2ds)^p])^{1/p}\\
\leq& &c_p^{1/p} (E^\varepsilon[(\int_0^T|h_s|^2ds)^{p/2}])^{1/p}+\frac{1}{2}(E^\varepsilon[(\int_0^T|h_s|^2ds)^{p}])^{1/p}\\
\leq& &c_p^{1/p}
\sqrt{(E^\varepsilon[(\int_0^T|h_s|^2ds)^{p}])^{1/p}}+\frac{1}{2}(E^\varepsilon[(\int_0^T|h_s|^2ds)^{p}])^{1/p}.
\end {eqnarray*} For $p=\frac{\alpha}{2}$, we have $\varrho=\frac{\alpha-2}{\alpha}L^g_{\alpha/2}$ and
\begin {eqnarray*}& &(E^\varepsilon[(\int_0^T|h_s|^2ds)^{\alpha/2}])^{1/\alpha}\\
&=&(E^\varepsilon[(\int_0^Te^{-2\varrho s}e^{2\varrho s}|h_s|^2ds)^{\alpha/2}])^{1/\alpha}\\
&\leq&(E^\varepsilon[\int_0^Te^{\alpha\varrho r}|h_r|^{\alpha}]dr)^{1/\alpha} (\frac{\xi_0}{\alpha^*\theta})^\frac{\alpha-2}{2\alpha}\\
&\leq&(\frac{1}{\alpha})^{\frac{1}{\alpha}}(\frac{1}{\alpha^*})^{\frac{1}{\alpha*}}\frac{2\Lambda_\sigma^2}{\lambda_\sigma^3}
\frac{|x-y|}{\sqrt{(1-e^{
-LT})/L}}
 \end {eqnarray*}
 By arguments above, we get
(\ref{esti-u}). (\ref{esti-ut}) can be obtained in the same way.
\end {proof}
\subsection {Gradient Estimates}
For $\varphi\in C_b(\mathbb{R}^d)$, we consider backward SDEs below
\begin {eqnarray}
\label {bsde}& &Y^x_t=\varphi(X^x_T)+\int_t^Tg(Y^x_s, Z^x_s)d s-\int_t^TZ^x_s d B_s,\\
\label {bsdet}& &\tilde{Y}^y_t=\varphi(\tilde{X}^y_T)+\int_t^Tg(\tilde{Y}^y_s, \tilde{Z}^y_s)d s-\int_t^T\tilde{Z}^y_s d \tilde{B}_s.
\end {eqnarray} Set $u(T,x):=Y^x_0$. Clearly, we have $u(T,y):=\tilde{Y}^y_0$. Rewrite (\ref{bsde}-\ref{bsdet})
\begin {eqnarray}
\label {bsder}& &Y^x_t=\varphi(X^x_T)+\int_t^T[g(Y^x_s, Z^x_s)-\varepsilon_s Z^x_s]d s-\int_t^TZ^x_s d B^\varepsilon_s,\\
\label {bsdetr}& &\tilde{Y}^y_t=\varphi(\tilde{X}^y_T)+\int_t^T[g(\tilde{Y}^y_s, \tilde{Z}^y_s)-(\varepsilon_s-h_s) \tilde{Z}^y_s]d s-\int_t^T\tilde{Z}^y_s d B^\varepsilon_s.
\end {eqnarray}
Let $k_s$ be an adapted measurable process  with $|k_t|\leq K_g$. Set $V_t=\exp\{\int_0^tk_sds\}$ and $W_t=U_tV_t$. Applying It\^o's formula, we get
\begin {eqnarray*}
\label {bsdeG}& &V_tY^x_t=V_T\varphi(X^x_T)+\int_t^TV_s[g(Y^x_s, Z^x_s)-k_sY^x_s-\varepsilon_s Z^x_s]ds-\int_t^TV_sZ^x_sdB^\varepsilon_s,\\
\label {bsdetG}& &W_t\tilde{Y}^y_t=W_T\varphi(X^x_T)+\int_t^TW_s[g(\tilde{Y}^y_s, \tilde{Z}^y_s)-k_s\tilde{Y}^y_s-\varepsilon_s \tilde{Z}^y_s]ds-\int_t^TW_s(\tilde{Z}^y_s+\tilde{Y}^y_sh_s)dB^\varepsilon_s.
\end {eqnarray*} So
\begin {eqnarray*}
\hat{Y}_0=& &V_T\varphi(X^x_T)(1-U_T)+(1-U_T)\int_0^TV_s[g(Y^x_s, Z^x_s)-k_sY^x_s-\varepsilon_sZ^x_s]ds\\
& &+\int_0^TW_s[\hat{g}(s)-k_s\hat{Y}_s-\varepsilon_s \hat{Z}_s]ds-\int_0^T[V_sZ^x_s-W_s(\tilde{Z}^y_s+\tilde{Y}^y_sh_s)]dB^\varepsilon_s\\
& &+\int_0^TU_t h_t\int_0^tV_s[g(Y^x_s, Z^x_s)-k_sY^x_s-\varepsilon_s Z^x_s]ds dB^\varepsilon_t,
\end {eqnarray*}where $\hat{Y}_s=Y^x_s-\tilde{Y}^y_s$, $\hat{Z}_s=Z^x_s-\tilde{Z}^y_s$, $\hat{g}(s)=g(Y^x_s, Z^x_s)-g(\tilde{Y}^y_s, \tilde{Z}^y_s)$. Choosing processes $k^0, \varepsilon^0$ such that $\hat{g}(s)-k^0_s\hat{Y}_s-\varepsilon^0_s \hat{Z}_s\leq0$, we have
\begin {eqnarray}\label {Grad-esti}\hat{Y}_0&\leq& E^{\varepsilon^0}[(1-U_T)[V_T\varphi(X^x_T)+\int_0^TV_s[g(Y^x_s, Z^x_s)-k^0_sY^x_s-\varepsilon^0_sZ^x_s]ds]]\\
\label {Grad-esti2}&\leq& e^{K_gT}\|\varphi\|_\infty E^{\varepsilon^0}[|1-U_T|]+E^{\varepsilon^0}[|1-U_T|\int_0^Te^{K_gs}(2K_g|Y^x_s|+2L_g|Z^x_s|+|g_0|)ds],
\end {eqnarray} where $g_0=g(0,0)$.
\subsubsection{Estimates for the Backward SDEs}
From \cite {BDHPS03}, we have the following estimates:
\begin {proposition} \label {esti-bsde} Let $(Y^x_t, Z^x_t)$ be the solution to the Backward SDE (\ref{bsde}). Set $\mu=K_g+4L_g^2$ Then there exists $d_p>0$ depending on $p, $ such that
\begin {eqnarray*}(E^{\varepsilon^0}[\sup_{t\in[0,T]}e^{\mu pt}|Y^x_t|^p])^{\frac{1}{p}}&\leq& e^{\mu T}d_p(\|\varphi\|_\infty+|g_0|/\mu),\\
(E^{\varepsilon^0}[(\int_0^Te^{2\mu s}|Z^x_s|^2ds)^{p/2}])^{\frac{1}{p}}&\leq& e^{\mu T}d_p(\|\varphi\|_\infty+|g_0|/\mu).
\end {eqnarray*}
\end {proposition}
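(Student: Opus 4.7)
The proposition is a direct application of the $L^p$ theory for Lipschitz BSDEs of \cite{BDHPS03}, translated to our framework by a change of measure. The plan is in three steps.

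First, I would reduce to a standard BSDE via Girsanov: since $|\varepsilon^0_s|\le L_g$ is bounded, $B^{\varepsilon^0}$ is a $P^{\varepsilon^0}$-Brownian motion, and $(Y^x,Z^x)$ solves the BSDE with terminal $\varphi(X^x_T)$, driven by $B^{\varepsilon^0}$, with generator $\tilde g(s,y,z) := g(y,z) - \varepsilon^0_s z$. This new driver is Lipschitz in $y$ with constant $K_g$, Lipschitz in $z$ with constant at most $2L_g$, and $|\tilde g(s,0,0)|\le|g_0|$.

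Next, for the $Y$-bound I would apply It\^o's formula to $e^{p\mu t}|Y^x_t|^p$ under $P^{\varepsilon^0}$. The Lipschitz bound $|\tilde g(s,y,z)|\le K_g|y|+2L_g|z|+|g_0|$, combined with Young's inequality $2pL_g|Y|^{p-1}|Z|\le \tfrac{p(p-1)}{2}|Y|^{p-2}|Z|^2 + \tfrac{2pL_g^2}{p-1}|Y|^p$, absorbs the cross term into the quadratic-variation contribution $\tfrac{p(p-1)}{2}|Y|^{p-2}|Z|^2$. With $\mu = K_g + 4L_g^2$, the remaining coefficient of $|Y|^p$ is non-positive for $p\ge 3/2$; taking conditional expectation, applying Young to $|g_0||Y|^{p-1}$, and BDG on the resulting local martingale yields the first inequality in this range. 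The leftover range $1\le p<3/2$ is recovered from the $p=2$ case by H\"older's inequality.

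For the $Z$-bound I would specialize the same It\^o computation to $p=2$: the choice $\mu=K_g+4L_g^2$ together with Young's inequality $4L_g|Y||Z|\le \tfrac{1}{2}|Z|^2 + 8L_g^2|Y|^2$ cancels the coefficient of $e^{2\mu s}|Y|^2$ exactly, leaving
\begin{equation*}
\tfrac{1}{2}\int_0^T e^{2\mu s}|Z^x_s|^2\,ds \le e^{2\mu T}\|\varphi\|_\infty^2 + C\int_0^T e^{2\mu s}|g_0|\,|Y^x_s|\,ds + 2\Bigl|\int_0^T e^{2\mu s} Y^x_s Z^x_s\,dB^{\varepsilon^0}_s\Bigr|.
\end{equation*}
Raising to the $(p/2)$-th power, taking expectation, applying BDG to the stochastic integral, and inserting the $Y$-bound from the previous step yields the second estimate.

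The main technical obstacle is the BDG step for $p<2$: the stochastic integral on the right-hand side must be controlled by a fractional power of the very $\int e^{2\mu s}|Z^x_s|^2\,ds$ quantity we are estimating, which forces a self-referential absorption. The clean way to handle this is the generalized Gronwall argument in \cite{BDHPS03}, which I would invoke directly to extract the explicit constant $d_p$ for all $p\ge 1$.
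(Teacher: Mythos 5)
Your proposal is correct and matches the paper's route: the paper gives no proof at all, simply asserting the estimates "from \cite{BDHPS03}", and your plan is precisely the intended application of that reference --- Girsanov reduction to a driver $\tilde g(s,y,z)=g(y,z)-\varepsilon^0_s z$ with Lipschitz constants $K_g$ and $2L_g$ (so that $\mu=K_g+4L_g^2$ is the standard exponential weight) followed by the $L^p$ a priori estimates of that paper. The only thing worth adding is that your restriction $p\ge 3/2$ and the H\"older interpolation for $1\le p<3/2$ are exactly the issues already handled in \cite{BDHPS03}, so invoking it directly, as the paper does, is legitimate.
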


\subsubsection{Proof to the Gradient Estimates}
\begin {lemma}\label {observation}Let $v: \mathbb{R}^d\rightarrow \mathbb{R}$. Assume that there exists a smooth function $F:\mathbb{R}\rightarrow \mathbb{R}$ such that $F(0)=0$ and
\[|v(x)-v(y)|\leq F(|x-y|), \ \textmd{for any} \ x,y\in \mathbb{R}^d.\] Then \[|v(x)-v(y)|\leq F'(0)|x-y|.\]
\end {lemma}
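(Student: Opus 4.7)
The plan is to exploit the fact that the hypothesis gives information about $v$ on all pairs $(x,y)$, in particular on arbitrarily close pairs, so that slicing the segment from $x$ to $y$ into many pieces lets us amplify the infinitesimal information $F'(0)$ into a global bound.

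Concretely, I would fix distinct points $x, y \in \mathbb{R}^d$ and, for each integer $n \geq 1$, introduce the partition $x_i := x + \tfrac{i}{n}(y-x)$ for $i = 0, 1, \ldots, n$, so that $x_0 = x$, $x_n = y$, and $|x_i - x_{i-1}| = |x-y|/n$. By the triangle inequality and the hypothesis applied to each consecutive pair,
\begin{equation*}
|v(x) - v(y)| \leq \sum_{i=1}^{n} |v(x_i) - v(x_{i-1})| \leq n\, F\!\left(\tfrac{|x-y|}{n}\right).
\end{equation*}
Since $F(0) = 0$ and $F$ is differentiable at $0$, one has
\begin{equation*}
n\, F\!\left(\tfrac{|x-y|}{n}\right) = |x-y| \cdot \frac{F(|x-y|/n) - F(0)}{|x-y|/n} \longrightarrow F'(0)\, |x-y|
\end{equation*}
as $n \to \infty$. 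Letting $n \to \infty$ in the previous inequality yields the claim.

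There is no real obstacle here: the statement is essentially the standard "linearization" of a modulus of continuity, and all that is used is $F(0)=0$ plus differentiability at $0$ (smoothness is overkill). The only point to be careful about is that the bound $F(|x-y|/n)$ is applied for \emph{every} pair of points at distance $|x-y|/n$, which is allowed because the hypothesis is uniform over $\mathbb{R}^d \times \mathbb{R}^d$. In the application to Theorem \ref{main1}, this lemma will be used with the bound on $|u(T,x)-u(T,y)|$ derived from \eqref{Grad-esti2}, whose right-hand side is a function of $|x-y|$ (through quantities such as $E^{\varepsilon^0}[|1-U_T|]$ controlled in Proposition \ref{esti-drift-l}), so that only the derivative at $0$ of that function contributes to the final Lipschitz constant.
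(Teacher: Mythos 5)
Your proof is correct and is essentially the same argument as the paper's: the paper sets $f(t)=v(x+t(y-x))$ and notes $|f(t)-f(s)|\leq F'(0)|x-y||t-s|+o(|t-s|)$, leaving the passage from this local estimate to the global bound implicit, which is exactly the telescoping subdivision $|v(x)-v(y)|\leq n\,F(|x-y|/n)\to F'(0)|x-y|$ that you carry out explicitly. Your observation that only differentiability of $F$ at $0$ (not smoothness) is needed is also accurate.
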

\begin {proof} Fix $x,y \in \mathbb{R}^d$. Set $x_t=x+t(y-x)$ and $f(t)=v(x_t)$ for $t\in[0,1]$. Then
\[|f(t)-f(s)|\leq F(|t-s||x-y|)=F'(0)|x-y||t-s|+o(|t-s|),\] by which we get the desired result.
\end {proof}

\textbf{Proof to Theorem \ref {main1}.} By Proposition \ref {esti-bsde}, we have the following estimates
\begin {eqnarray*} (e^{K_gT}\|\varphi\|_\infty +|g_0|\int_0^Te^{K_gs}ds)E^{\varepsilon^0}[|U_T-1|]
\leq (\|\varphi\|_\infty+|g_0|/\mu)e^{\mu T}(\|U_T u_T\|_{L^1}+\|u_T\|_{L^1}),
\end {eqnarray*}where $\|\cdot\|_{L^1}$ denotes the norms under $P^{\varepsilon^0}$. Below we shall denote by $\|\cdot\|_{\tilde{L}^p}$ the norms under $\tilde{P}^{\varepsilon^0}$.
\begin {eqnarray*}& &2L_g E^{\varepsilon^0}[|1-U_T|\int_0^Te^{K_gs}|Z^x_s|ds]\\
&\leq& \frac{1}{\sqrt{2}}\|1-U_T\|_{1+\frac{\delta}{2}}(E^{\varepsilon^0}[(\int_0^Te^{2\mu s}|Z^x_s|^2ds)^{\frac{2+\delta}{2\delta}}])^{\frac{\delta}{2+\delta}}\\
&\leq&\frac{1}{\sqrt{2}}d_{\frac{\delta+2}{\delta}}(\|\varphi\|_\infty+|g_0|/\mu)e^{\mu T}
(\|U_T
u_T\|_{L^{1+\frac{\delta}{2}}}+\|u_T\|_{L^{1+\frac{\delta}{2}}});
\end {eqnarray*}
\begin {eqnarray*}& &2K_gE^{\varepsilon^0}[|1-U_T|\int_0^Te^{K_gs}|Y^x_s|ds]\\
&\leq& \frac{K_g}{2L_g^2}E^{\varepsilon^0}[|1-U_T|\sup_{t\in[0,T]}(e^{\mu t}|Y^x_t|)]\\
&\leq&\frac{K_g}{2L_g^2}d_{\frac{\delta+2}{\delta}}(\|\varphi\|_\infty+|g_0|/\mu)e^{\mu T}
(\|U_T
u_T\|_{L^{1+\frac{\delta}{2}}}+\|u_T\|_{L^{1+\frac{\delta}{2}}}).
\end {eqnarray*}
So by (\ref {Grad-esti}-\ref {Grad-esti2}), we have
\[|u(T,x)-u(T,y)|\leq C_{\beta_\sigma}C_g(\|\varphi\|_\infty+|g_0|/\mu)e^{\mu T}
(\|U_T
u_T\|_{L^{1+\frac{\delta}{2}}}+\|u_T\|_{L^{1+\frac{\delta}{2}}}),\] where $C_{\beta_\sigma}=\frac{1}{\sqrt{2}}d_{\frac{\delta+2}{\delta}}+1$  and $C_g=1+\frac{K_g}{L_g^2}$.

By Proposition \ref {esti-drift-l}, we have \[\|U_T
u_T\|_{L^{1+\frac{\delta}{2}}}\leq
(E^{\varepsilon^0}[U_T^{1+\delta}])^{\frac{1}{2+\delta}}\tilde{E}^{\varepsilon^0}[|u_T|^{2+\delta}])^{\frac{1}{2+\delta}}\leq
\exp\{\frac{1}{\Lambda_\sigma^2\xi^0_0}|x-y|^2\}\|u_T\|_{\tilde{L}^{2+\delta}}.\]
Choose $\alpha\geq 4+2\delta$ in the definition of $\xi$. By Lemma
\ref {esti2-drif-l} and Lemma \ref {observation}, we have
\begin {eqnarray*}|u(T,x)-u(T,y)|\leq 4C_\alpha C_{\beta_\sigma}C_g\frac{\Lambda_\sigma^2}{\lambda_\sigma^3}(\|\varphi\|_\infty+|g_0|/\mu)\frac{e^{\mu T}}{\sqrt{\xi^0_0}}.
\end {eqnarray*}
 Setting $\alpha=5$, we get the
desired result.

\section{Gradient Estimates for Nonlinear Semigroups}
In this section, we shall derive  uniform gradient estimates for semigroups associated with stochastic differential equations  below
\begin {eqnarray} d X_t=\sigma(X_t)d B_t +b(X_t)dt,
\end {eqnarray} where $B$ is a $d$-dimensional $G$-Brownian motion with $G(A)=\frac{1}{2}\sup_{\gamma\in\Gamma} \mathrm{tr}(A\gamma)$ for some bounded, closed, convex subset $\Gamma\subset \mathbb{S}_d^+$. The generator of the diffusion process will be $$\mathcal{L}f=G(\sigma^*D^2f\sigma)+b\cdot Df.$$

\

Hypothesis (\textbf{G}).

(i) $\sigma: \mathbb{R}^d\rightarrow \mathbb{S}_d$, $b: \mathbb{R}^d\rightarrow \mathbb{R}^d$ are  Lipschitz continuous.
\begin {eqnarray*}& &  |\sigma(x)-\sigma(x')|\leq L_\sigma |x-x'|;\\
  & & |b(x)-b(x')|\leq L_b |x-x'|;
\end {eqnarray*}

(ii) There exists $\Lambda_\sigma\geq \lambda_\sigma>0$ such that $\lambda_\sigma I \leq\sigma(x)\leq \Lambda_\sigma I$;

(iii) There exists $\Lambda_\Gamma\geq\lambda_\Gamma>0$ such that $\Lambda_\Gamma^2 I\geq\gamma\geq \lambda_\Gamma^2 I$, $\gamma\in \Gamma$.

For convenience, we denote by
$\beta_\sigma:=\frac{\Lambda_\sigma}{\lambda_\sigma}$,
$\beta_\Gamma:=\frac{\Lambda_\Gamma}{\lambda_\Gamma}$.

\

The main result of this section is below.
\begin {theorem}\label {main2}Assume that Hypothesis (\textbf{G}) holds. Let $u(T,x):=\mathbb{E}[\varphi(X^x_T)]$. Then \[|u(T,x)-u(T,y)|\leq C \frac{\|\varphi\|_\infty}{\sqrt{(1-e^{-LT})/L}}|x-y|,\] where
$C=\frac{2\Lambda_\sigma^2}{\lambda_\sigma^3\lambda_\Gamma}$, $L=2L_b+ \Lambda_\Gamma^2 L_\sigma^2$.
\end {theorem}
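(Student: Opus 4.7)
The plan is to combine the representation theorem for $G$-expectation with a pathwise coupling under each representing measure, following the strategy of Section 3 but in the considerably simpler situation where no backward SDE intervenes (since we only need to handle $\varphi$ bounded, not a BSDE driver).

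\textbf{Step 1 (Reduction via representation).} By Theorem \ref{the2.7}, $u(T,x) = \sup_{P \in \mathcal{P}} E_P[\varphi(X^x_T)]$, whence
\begin{equation*}
|u(T,x) - u(T,y)| \leq \sup_{P \in \mathcal{P}} |E_P[\varphi(X^x_T)] - E_P[\varphi(X^y_T)]|.
\end{equation*}
It therefore suffices to estimate the right-hand side uniformly in $P \in \mathcal{P}$. Under each such $P$, the canonical process $B$ is a continuous martingale with $d\langle B \rangle_t = \gamma_t \, dt$ for some adapted $\gamma_t \in \Gamma$, so $\lambda_\Gamma^2 I \leq \gamma_t \leq \Lambda_\Gamma^2 I$, and I can write $dB_t = \gamma_t^{1/2} dW_t$ for a standard $P$-Brownian motion $W$.

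\textbf{Step 2 (Coupling under $P$).} Fix $P \in \mathcal{P}$ and set $\xi_t = (\theta/L)(1 - e^{L(t-T)})$ with $\theta = \lambda_\sigma^2 \lambda_\Gamma/2$ and $L = 2L_b + \Lambda_\Gamma^2 L_\sigma^2$. Define
\begin{align*}
dX^x_t &= \sigma(X^x_t) dB_t + b(X^x_t) dt, \\
d\tilde{X}^y_t &= \sigma(\tilde{X}^y_t) dB_t + b(\tilde{X}^y_t) dt + \tfrac{1}{\xi_t}\sigma(X^x_t)(X^x_t - \tilde{X}^y_t) dt.
\end{align*}
Applying It\^o's formula to $H_t := |\hat{X}_t|^2$ under $P$, the trace term becomes $\mathrm{tr}(\hat{\sigma}^*\hat{\sigma}\gamma_t) \leq \Lambda_\Gamma^2 L_\sigma^2 H_t$, which together with the Lipschitz drift contribution $2L_b H_t$ produces the constant $L$. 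The dissipative coupling term is bounded above by $-2\lambda_\sigma \xi_t^{-1} H_t$. The choice of $\theta$ together with the defining ODE of $\xi$ then reproduces the key inequality $\frac{2p-1}{p}L^g_p \xi_r - (\text{dissipation}) - \frac{2p-1}{2p}\xi'_r \leq -\theta$ of Section 3.1, so that $H_t/\xi_t$ is integrable on $[0,T)$ and $X^x_T = \tilde{X}^y_T$ $P$-a.s., as in Proposition \ref{extention}.

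\textbf{Step 3 (Girsanov and conclusion).} Rewrite the coupling drift as a Girsanov shift by setting $h_s = -\xi_s^{-1}\gamma_s^{-1/2}\sigma(\tilde{X}^y_s)^{-1}\sigma(X^x_s)\hat{X}_s$ and $U_T = \mathcal{E}(\int_0^\cdot h_s \, dW_s)_T$. Under $\tilde{P} = U_T \cdot P$, the process $\tilde{X}^y$ has the same law as $X^y$ does under $P$, so
\begin{equation*}
E_P[\varphi(X^x_T)] - E_P[\varphi(X^y_T)] = E_P[\varphi(\tilde{X}^y_T)] - E_P[U_T\varphi(\tilde{X}^y_T)] = E_P[(1 - U_T)\varphi(\tilde{X}^y_T)].
\end{equation*}
Adapting the exponential estimates of Proposition \ref{esti-drift-l} and Lemma \ref{esti2-drif-l} with $\gamma^{-1/2}$ factors (which contribute an extra $\lambda_\Gamma^{-1}$), one obtains $E_P[|1 - U_T|] \leq \frac{2\Lambda_\sigma^2}{\lambda_\sigma^3 \lambda_\Gamma} \cdot \frac{|x-y|}{\sqrt{(1 - e^{-LT})/L}} + O(|x-y|^2)$. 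Combined with the trivial $\|\varphi\|_\infty$-bound, the linearization trick of Lemma \ref{observation}, and the supremum over $P \in \mathcal{P}$, this yields the claim.

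\textbf{Main obstacle.} The technical heart is tracking the $\gamma_t$-dependent constants through both the It\^o estimate on $|\hat X|^2/\xi^2$ and the Girsanov density bound, so that they consolidate precisely into $C = 2\Lambda_\sigma^2/(\lambda_\sigma^3 \lambda_\Gamma)$ and $L = 2L_b + \Lambda_\Gamma^2 L_\sigma^2$; in particular, one must verify that $\theta = \lambda_\sigma^2\lambda_\Gamma/2$ is the correct substitution for the $\lambda_\sigma^2\Lambda_\sigma^{-1}/2$ used in Section 3, since this is what ties the dissipation in $\hat X$ together with the $\gamma_t^{-1/2}$ factor appearing in $h$.
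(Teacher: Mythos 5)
Your reduction in Step 1 is harmless as an inequality, but the argument collapses at the central claim of Step 3: ``under $\tilde{P}=U_T\cdot P$, the process $\tilde{X}^y$ has the same law as $X^y$ does under $P$.'' This is false in the $G$-setting. Under $P\in\mathcal{P}$ the canonical process satisfies $d\langle B\rangle_t=\gamma_t\,dt$ with $\gamma_t$ a genuinely \emph{random} adapted process valued in $\Gamma$; the Girsanov tilt preserves the quadratic variation pathwise but changes the law of the path, hence changes the law of the volatility process $(\gamma_t)$. Consequently the law of $\tilde{B}$ under $\tilde{P}$ is not the law of $B$ under $P$, and the identity $E_P[U_T\varphi(\tilde{X}^y_T)]=E_P[\varphi(X^y_T)]$ fails except when $\gamma$ is deterministic (i.e.\ the classical case). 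At best one can hope for the one-sided bound $E_{\tilde P}[\varphi(\tilde X^y_T)]\leq\sup_{Q\in\mathcal{P}}E_Q[\varphi(X^y_T)]=u(T,y)$, and even that requires identifying the $\tilde P$-law of $\tilde B$ as admissible for the representing set. This is precisely why the paper does not argue measure-by-measure: it constructs the Girsanov transformation at the level of the sublinear expectation in the extended $\hat G$-space (with the auxiliary process $B'$ satisfying $\langle B,B'\rangle_t=tI_d$), proves that $\tilde B$ is again a $G$-Brownian motion under $\tilde{\mathbb{E}}$ --- first for bounded drifts, then for the unbounded coupling drift via the quasi-continuous localization of Lemma \ref{HT} --- and only then obtains the equality $\tilde{\mathbb{E}}[\varphi(Y^y_T)]=\mathbb{E}[\varphi(X^y_T)]$ of Corollary \ref{coupling}, which is the sublinear substitute for your false pathwise-law claim. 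A repaired version of your plan (fix a near-optimal $P$ for $u(T,x)$, couple, and bound $E_{\tilde P}[\varphi(\tilde X^y_T)]$ by $u(T,y)$ after verifying admissibility of the tilted law, then symmetrize) is feasible, but that admissibility step is the real content and is missing.

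A secondary but concrete error is the choice $\theta=\lambda_\sigma^2\lambda_\Gamma/2$ in Step 2. The dissipation produced by the coupling term is governed by $\sigma$ alone (it is $-2\lambda_\sigma\xi_t^{-1}H_t$ under $P$ and $-2\lambda_\sigma^2\Lambda_\sigma^{-1}\xi_t^{-1}H_t$ under the tilted measure); $\lambda_\Gamma$ enters only through the Girsanov exponent $\mathrm{tr}[h_sh_s^*\,d\langle B'\rangle_s]$ (equivalently your $\gamma^{-1/2}$ factor), not through the ODE for $\xi$. With your $\xi_t=(\theta/L)(1-e^{L(t-T)})$ one computes $\tfrac{L}{2}\xi_t-\lambda_\sigma-\tfrac12\xi_t'=\tfrac{\theta}{2}-\lambda_\sigma=\tfrac{\lambda_\sigma^2\lambda_\Gamma}{4}-\lambda_\sigma$, which is positive whenever $\lambda_\sigma\lambda_\Gamma>4$, so the key dissipativity inequality fails. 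The paper instead takes $\xi_t=\frac{2(\lambda_\sigma^2\Lambda_\sigma^{-1}-\theta)}{L}(1-e^{L(t-T)})$ with $\theta=\lambda_\sigma^2\Lambda_\sigma^{-1}/2$, keeping $\Gamma$ out of $\theta$ entirely; the factor $\lambda_\Gamma^{-1}$ in the final constant $C=2\Lambda_\sigma^2/(\lambda_\sigma^3\lambda_\Gamma)$ then comes from the bound $d\langle B'\rangle_s=\gamma_s^{-1}ds\leq\lambda_\Gamma^{-2}ds$ in the exponential martingale estimate.
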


\subsection {Construction of the Coupling}

First, we shall introduce the construction of the coupling. The arguments below are similar to those in the last section.

Let $\xi_t=\frac{2(\lambda^2_\sigma\Lambda^{-1}_\sigma-\theta)}{L}(1-e^{L(t-T)})$ for some $\theta\in [\lambda^2_\sigma\Lambda^{-1}_\sigma/2, \lambda^2_\sigma\Lambda^{-1}_\sigma)$, where $L=2L_b+ \Lambda_\Gamma^2 L_\sigma^2$. Consider the coupling
\begin {eqnarray}\label {equX} d X_t&=&\sigma(X_t)d B_t +b(X_t)dt, \ X_0=x;\\
                 \label {equY}d Y_t&=&\sigma(Y_t)d B_t +b(Y_t)dt+\frac{1}{\xi_t}\sigma(X_t)(X_t-Y_t)dt, \  Y_0=y.
\end {eqnarray} Then $Z_t:=X_t-Y_t$ satisfies the equation below
\begin {eqnarray}\label {equZ} d Z_t= \hat{\sigma}(t)d B_t +\hat{b}(t)dt-\frac{1}{\xi_t}\sigma(X_t)Z_t dt, \ Z_0=x-y.
\end {eqnarray} where $\hat{\sigma}(t)=\sigma(X_t)-\sigma(Y_t)$, $\hat{b}(t)=b(X_t)-b(Y_t)$.

Set $\tilde{B}_t=B_t+\int_0^t\frac{\sigma(Y_s)^{-1}\sigma(X_s)Z_s}{\xi_s}ds$. Rewrite equations (\ref {equX}-\ref {equZ}) as
\begin {eqnarray}\label {equXr} d X_t&=&\sigma(X_t)d \tilde{B}_t +b(X_t)dt-\frac{1}{\xi_t}\sigma(X_t)\sigma(Y_t)^{-1}\sigma(X_t)Z_t dt, \ X_0=x;\\
                 \label {equYr}d Y_t&=&\sigma(Y_t)d \tilde{B}_t +b(Y_t)dt, \  Y_0=y;\\
                 \label {equZr} d Z_t&=&\hat{\sigma}(t)d \tilde{B}_t +\hat{b}(t)dt-\frac{1}{\xi_t}\sigma(X_t)\sigma(Y_t)^{-1}\sigma(X_t)Z_t dt, \ Z_0=x-y.
\end {eqnarray}

By It\^o's formula, we have
\begin {eqnarray*}|Z_t|^2=& &|x-y|^2+\int_0^t2\hat{\sigma}(s)^* Z_s\cdot d B_s+\int_0^t2Z_s\cdot\hat{b}(s)ds -\int_0^t\frac{2Z_s^*\sigma(X_s)Z_s}{\xi_s}ds\\
& &+\int_0^t \mathrm{tr}[\hat{\sigma}(s)^*\hat{\sigma}(s)d \langle B\rangle_s],\\
=& &|x-y|^2+\int_0^t2\hat{\sigma}(s)^* Z_s\cdot d \tilde{B}_s+\int_0^t2Z_s\cdot\hat{b}(s)ds -\int_0^t\frac{2Z_s^*\sigma(X_t)\sigma(Y_t)^{-1}\sigma(X_s)Z_s}{\xi_s}ds\\
& &+\int_0^t \mathrm{tr}[\hat{\sigma}(s)^*\hat{\sigma}(s)d \langle B\rangle_s], \ t\in (0,T).
\end {eqnarray*} So
\begin {eqnarray*} & & d |Z_t|^2\leq 2\hat{\sigma}(t)^* Z_t\cdot d B_t +(2L_b+L_\sigma^2\Lambda_\Gamma^2-\frac{2\lambda_\sigma}{\xi_t})|Z_t|^2 dt, t\in (0,T),\\
                   & & d |Z_t|^2\leq 2\hat{\sigma}(t)^* Z_t\cdot d \tilde{B}_t +(2L_b+L_\sigma^2\Lambda_\Gamma^2-\frac{2\lambda^2_\sigma \Lambda^{-1}_\sigma}{\xi_t})|Z_t|^2 dt, t\in (0,T).
\end {eqnarray*}
 Then
\begin {eqnarray}
 \label {lp-esti}\frac{|Z_t|^2}{\xi_t}
&\leq&\frac{|x-y|^2}{\xi_0}+\int_0^t\frac{2}{\xi_r}\hat{\sigma}(r)^* Z_r\cdot d B_r+\int_0^t\frac{2|Z_r|^2}{\xi^{2}_r}(\frac{L}{2}\xi_r-\lambda_\sigma-\frac{1}{2}\xi'_r)dr,\\
 \label {lp-Esti}\frac{|Z_t|^2}{\xi_t}
&\leq&\frac{|x-y|^2}{\xi_0}+\int_0^t\frac{2}{\xi_r}\hat{\sigma}(r)^* Z_r\cdot d \tilde{B}_r+\int_0^t\frac{2|Z_r|^2}{\xi^{2}_r}(\frac{L}{2}\xi_r-\frac{\lambda^2_\sigma}{\Lambda_\sigma}-\frac{1}{2}\xi'_r)dr.
\end {eqnarray}
\subsection {Extension of $Y$ to $T$}
By the definition of $\xi$, we have $\frac{L}{2}\xi_r-\lambda_\sigma-\frac{1}{2}\xi'_r\leq \frac{L}{2}\xi_r-\frac{\lambda^2_\sigma}{\Lambda_\sigma}-\frac{1}{2}\xi'_r=-\theta$. So
\[\mathbb{E}[\int_0^t\frac{|Z_r|^2}{\xi^{2}_r}dr]\leq\frac{|x-y|^2}{2\theta\xi_0},\] which, however, does NOT imply
that $\frac{Z_r}{\xi_r}\in [M^2_G(0,T)]^d$. This is different from the linear case.

 Similar to the arguments in Section \ref {sec-l-1.2}, we can show that, for any $p\geq1$, there exists $C(p)>0$ such that
    \[\mathbb{E}[\int_0^t\frac{|Z_r|^{p}}{\xi^{p}_r}]dr\leq C(p), \ \textmd{for any} \ t\in[0,T).\]
Note  that, for any $t\in(0,T)$  and $p\geq 1$, $\frac{Z_r}{\xi_r}1_{[0,t](r)}\in [M^p_G(0,T)]^d$, and that
\[\mathbb{E}[\int_s^t\frac{|Z_r|^{p}}{\xi^{p}_r}]dr\leq C(\alpha p)^{\frac{1}{\alpha}}(t-s)^{\frac{1}{\alpha^*}} \
\textmd{for any}  \ \alpha>1.\] So there exists
\begin {eqnarray}\label {Drift}g\in [M^p_G(0,T)]^d \ \textmd{such that}  \ g_s=\frac{1}{\xi_s}(X_s-Y_s), \ s\in[0,T).
 \end {eqnarray}

 Let $(\bar{Y}_t)_{t\in[0,T]}$ be the solution to the equation below
\begin {eqnarray}
                 \label {equYe}d \bar{Y}_t&=&\sigma(\bar{Y}_t)d B_t +b(\bar{Y}_t)dt+\sigma(X_t)g_t dt, \  \bar{Y}_0=y.
\end {eqnarray} Clearly, we have $Y_t=\bar{Y}_t$, $t\in[0,T)$. So $\bar{Y}$ is a continuous extension of $Y$ to $[0,T]$. In the sequel, we shall still write $Y$ for $\bar{Y}$.

The following result is the counterpart of Proposition \ref {extention} in the $G$-expectation space.
\begin {proposition} Let $X$ and $Y$ be the solutions to equations (\ref {equX}) and (\ref {equYe}). We have \[X_T=Y_T, \ q.s.\]
\end {proposition}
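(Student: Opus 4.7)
The plan is to follow the same strategy as in the linear Proposition \ref{extention}, but reinterpret the ``almost sure'' conclusion as a ``quasi-sure'' statement under the capacity $c$ associated with the $G$-expectation. The engine is a Markov-type inequality for capacities: if $\mathbb{E}[F]<\infty$ for a nonnegative $F$, then $c\{F=\infty\}=0$, because $c\{F>n\}\le \mathbb{E}[F]/n\to 0$.

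First I would establish that both $X$ and $Y$, viewed as solutions of $G$-SDEs, admit q.s. continuous versions on $[0,T]$. For $X$ this is standard; for $Y=\bar Y$ defined by (\ref{equYe}) with drift coefficient $\sigma(X_t)g_t$ where $g\in [M^p_G(0,T)]^d$, the Bochner integral $\int_0^t \sigma(X_s)g_s\,ds$ has a q.s. continuous version, and the stochastic integral $\int_0^t\sigma(\bar Y_s)\,dB_s$ does as well, so $\bar Y$ is q.s. continuous on the closed interval $[0,T]$.

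Next, I would fix $\omega$ in the set $\{X_T\neq Y_T\}$ and argue by q.s. pathwise continuity that there exist $\delta>0$ and $t_0<T$ with $|X_s(\omega)-Y_s(\omega)|\ge \delta$ for all $s\in[t_0,T]$. Since $\xi_s=\frac{2(\lambda_\sigma^2\Lambda_\sigma^{-1}-\theta)}{L}(1-e^{L(s-T)})$ satisfies $\xi_s\sim C(T-s)$ as $s\uparrow T$, this forces
\begin{equation*}
\int_0^T |g_s(\omega)|^p\,ds \;=\; \int_0^T \frac{|X_s(\omega)-Y_s(\omega)|^p}{\xi_s^p}\,ds \;=\; +\infty.
\end{equation*}
Thus (up to a polar set coming from the q.s.-continuity exceptional sets)
\begin{equation*}
\{X_T\neq Y_T\} \;\subseteq\; \Bigl\{\int_0^T |g_s|^p\,ds=+\infty\Bigr\}.
\end{equation*}

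Finally, the bound $\mathbb{E}\bigl[\int_0^T |g_s|^p\,ds\bigr]\le C(p)$ established just before the proposition (via iterating the estimates (\ref{lp-esti})--(\ref{lp-Esti}) and the argument referred back to Section \ref{sec-l-1.2}) gives, through the capacity Markov inequality, $c\{\int_0^T|g_s|^p\,ds=\infty\}=0$. Combining this with the inclusion above yields $c\{X_T\neq Y_T\}=0$, i.e.\ $X_T=Y_T$ q.s.

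The main obstacle I anticipate is the subtle point that ``finite expectation implies q.s. finiteness'' uses sublinearity plus the capacity representation $c(A)=\sup_{P\in\mathcal{P}}P(A)$, and that $g$ lies in $M^p_G(0,T)$ (not just in $L^p$ under a single measure) so that $\int_0^T|g_s|^p\,ds$ is admissible as a quasi-continuous random variable. Handling the q.s. continuity of the extension $\bar Y$ defined through an $M^p_G$-drift—rather than a bounded one—is the only real technical point; once that is granted, the contradiction argument is essentially the same as in Proposition \ref{extention}, only with ``a.s.'' replaced by ``q.s.''.
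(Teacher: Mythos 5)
Your argument is correct and is essentially the proof the paper intends: the paper gives no written proof here, merely noting the result is the counterpart of Proposition \ref{extention}, and your proof is exactly that pathwise contradiction argument transplanted to the quasi-sure setting via the capacity Markov inequality $c\{F>n\}\le \mathbb{E}[F]/n$ applied to $F=\int_0^T|g_s|^p\,ds$, whose $G$-expectation is bounded by the $C(p)$ estimate established just before the proposition. The two technical points you flag (q.s.\ continuity of the extension $\bar Y$ and admissibility of $\int_0^T|g_s|^p\,ds$ as a quasi-continuous random variable, which is where $g\in [M^p_G(0,T)]^d$ rather than mere measurability is needed) are precisely the right ones, and your treatment of them is sound.
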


\subsection {Girsanov Transformation with Bounded Drifts}
For a bounded process $h\in [M_G^2(0,T)]^d$, set
$\tilde{B}_t:=B_t-\int_0^th_sds$. We try to find a sublinear
expectation under which $\tilde{B}_t$ is a $G$-Brownian motion. Hu
et al (2014) constructed a sublinear expectation
$\tilde{\mathbb{E}}$ with such property  in an extended
$\hat{G}$-expectation space $(\hat{\Omega}_T, L^1_{\hat{G}},
\mathbb{E})$ with $\hat{\Omega}_T:=C_0([0,T]; \mathbb{R}^{2d})$ and \[
\hat{G}(A)=\frac{1}{2}\sup_{\gamma\in\Gamma}\mathrm{tr}\left[
A\left[
\begin{array}
[c]{cc}%
\gamma & I_d\\
I_d & \gamma^{-1}%
\end{array}
\right]  \right]  ,\ A\in\mathbb{S}_{2d}.
\] Let $\hat{B}_t=(B_{t},B'_{t})$ be the canonical process in the extended space. By the definition of $\hat{G}$, we have $\langle B,B'\rangle_t=t I_d$. \cite {HJPSb}
constructed the time consistent sublinear expectation
$\mathbb{\tilde{E}}_t$ in the following way
\[\mathbb{\tilde{E}}_t[\xi]= \mathbb{E}_t[\frac{U^h_T}{U^h_t}\xi], \ \textmd{for} \ \xi\in
L_{ip}(\Omega_T),\] where
\[ U^h_t:=\exp\{\int_{0}^{t}h_{s}\cdot dB'_{s}-\frac{1}{2}\int_{0}^{t}\mathrm{tr}[h_sh^*_s
d\langle B'\rangle_{s}]\}.\] \cite {HJPSb} proved that
$\tilde{B}_t:=B_t-\int_0^th_sds$ is a $G$-Brownian motion under
$\tilde{\mathbb{E}}$. For readers' convenience, we give a sketch of
the proof. Actually, it suffices to prove it for the case $h_s\equiv a\in \mathbb{R}^d$.
For any $\varphi \in C_b(\mathbb{R}^d)$, set
$u(t,x):=\tilde{\mathbb{E}}[\varphi(x+\tilde{B}_t)]$.We shall prove
that $u$ is the viscosity solution to the $G$-heat equation
\begin {eqnarray*}\partial_t u-G(D^2_x u)&=&0,\\
                               u(0,x)&=&\varphi(x).
\end {eqnarray*}
By the
definition of $\tilde{\mathbb{E}}$, we have
\begin
{eqnarray*}u(t+s,x)&=&\tilde{\mathbb{E}}[\varphi(x+\tilde{B}_{s+t})]\\
&=&\mathbb{E}[U^h_{t+s}\varphi(x+\tilde{B}_{t+s})]\\
&=&\mathbb{E}[U^h_s\mathbb{E}_s[\frac{U^h_{t+s}}{U^h_s}\varphi(x+\tilde{B}_{s}+\tilde{B}_{t+s}-\tilde{B}_{s})]]\\
&=&\mathbb{E}[U^h_su(t, x+\tilde{B}_s)]\\
&=&\mathbb{\tilde{E}}[u(t, x+\tilde{B}_s)].
\end {eqnarray*}
By It\^o's formula, we have $U^h_t\tilde{B}_t=\int_0^tU^h_rd B_r$. So
\[\mathbb{\tilde{E}}_s[\tilde{B}_t]=\mathbb{E}_s[\frac{U^h_t}{U^h_s}\tilde{B}_t]=\frac{1}{U^h_s}\int_0^sU^h_rd B_r=\tilde{B}_s,\] which implies that $\tilde{B}_t$ is a (symmetric) martingale under $\mathbb{\tilde{E}}$. Particularly, we have
\[\tilde{\mathbb{E}}[\tilde{B}_1]=0, \ \textmd{and} \ \frac{1}{2}\tilde{\mathbb{E}}[\langle A\tilde{B}_1, \tilde{B}_1\rangle]=\frac{1}{2}\tilde{\mathbb{E}}[\mathrm{tr}[ A\langle B\rangle_1]], \ \textmd{for} \ A\in \mathbb{S}_d.\]
On the one hand, we have $\frac{1}{2}\mathrm{tr}[ A\langle B\rangle_1]\leq G(A)$, and consequently
\[\frac{1}{2}\tilde{\mathbb{E}}[\langle A\tilde{B}_1, \tilde{B}_1\rangle]\leq G(A).\]
On the other hand, by the representation of the $G$-expectation (see \cite{DHP11}), we have
\[\frac{1}{2}\tilde{\mathbb{E}}[\langle A\tilde{B}_1, \tilde{B}_1\rangle]\geq \frac{1}{2}\sup_{\gamma\in \Gamma}\mathbb{E}_{P_\gamma}[U^h_T\mathrm{tr}[ A\langle B\rangle_1]]=\frac{1}{2}\sup_{\gamma\in \Gamma}\mathrm{tr}[A\gamma]=G(A),\] where
$P_\gamma$ is a probability on $\hat{\Omega}_T$ such that the canonical process $\hat{B}_t$ is a martingale with
\[
\langle \hat{B}\rangle_t=\left[
\begin{array}
[c]{cc}%
\gamma & I_d\\
I_d & \gamma^{-1}%
\end{array}
\right]t .
\]
Combining the above arguments, we can prove that $u$ is the
viscosity solution to the $G$-heat equation.

\subsection {Localizations and Estimates}
\subsubsection {Localizations}
To apply  localization procedure, one has to show that the corresponding stopping times are quasi-continuous, which is not obvious in the $G$-expectation space. In this section, we shall prove the quasi-continuity of hitting times for  processes of certain forms. The following result generalizes Theorem 4.1 in \cite {Song11b}.
\begin {lemma}\label {HT} Let $X_t=\int_0^t Z_s\cdot dB_s+\int_0^t\eta_s ds+\int_0^t \mathrm{tr}[\zeta_sd\langle B\rangle_s]$ with $Z\in [H^1_G(0,T)]^d$ and $\eta, \zeta^{i,j}\in M^1_G(0,T)$. Assume $\int_0^t\eta_s ds+\int_0^t \mathrm{tr}[\zeta_sd\langle B\rangle_s]$ is non-decreasing and \[\int_0^t \mathrm{tr}[Z_sZ_s^*d\langle B\rangle_s]+\int_0^t\eta_s ds+\int_0^t \mathrm{tr}[\zeta_sd\langle B\rangle_s]\] is strictly increasing. For $a>0$,  $\tau_a:= \inf\{t\geq 0| \ X_t>a\}\wedge T$ is quasi-continuous.
\end {lemma}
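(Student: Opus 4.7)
The strategy is to combine three ingredients: a quasi-continuous modification of $X$; continuity of the hitting-time functional in the uniform topology at paths that strictly cross the level $a$; and a capacity bound showing that the set of non-strictly-crossing paths is polar and hence absorbable into an open exceptional set of small capacity. For the first ingredient, $X$ admits a $c$-quasi-continuous version: by Remark 2.6 (ii), $\int_0^\cdot\eta_s\,ds$ is quasi-continuous; the same property for $\int_0^\cdot\mathrm{tr}[\zeta_s\,d\langle B\rangle_s]$ follows from the standard step-process approximation in $M_G^1(0,T)$ together with the continuity of $\langle B\rangle$; and $\int_0^\cdot Z_s\cdot dB_s$ is a symmetric $G$-martingale, quasi-continuous by \cite{Song11}. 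Given $\varepsilon>0$, fix an open set $G_1\subset\Omega_T$ with $c(G_1)<\varepsilon/2$ such that $(t,\omega)\mapsto X_t(\omega)$ is jointly continuous on $[0,T]\times G_1^c$.

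For the second ingredient, define the ``touching'' set
\[
B:=\bigl\{\omega\in G_1^c:\tau_a(\omega)<T,\ \forall\, h>0,\ \sup_{t\in[\tau_a,(\tau_a+h)\wedge T]}X_t(\omega)\le a\bigr\},
\]
and fix $\omega\notin G_1\cup B$. For any sequence $\omega^n\to\omega$ in $G_1^c$, joint continuity and compactness of $[0,T]$ yield $X_\cdot(\omega^n)\to X_\cdot(\omega)$ uniformly; upper semicontinuity of $\tau_a$ at $\omega$ is then immediate from the strict-crossing property built into $\omega\notin B$, while lower semicontinuity follows once one knows $X_t(\omega)<a$ strictly for every $t<\tau_a(\omega)$. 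The latter is a consequence of the strict-increase hypothesis: an equality $X_{t_0}(\omega)=a$ with $t_0<\tau_a$ would force, under each $P\in\mathcal{P}$, both the martingale and drift parts of $X$ to be constant on $[t_0,\tau_a]$ (via optional stopping applied to the non-positive local martingale $M_\cdot-M_{t_0}$, using that the finite-variation part is non-decreasing), contradicting the strict increase of $\int\mathrm{tr}[ZZ^*d\langle B\rangle]+\int\eta\,ds+\int\mathrm{tr}[\zeta d\langle B\rangle]$.

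For the third ingredient, $c(B)=0$: on $B$ the ``energy'' on $[\tau_a,\tau_a+h]$ would be forced to vanish, since $X\le a=X_{\tau_a}$ on a right-neighborhood forces (as in the argument above) both the martingale and drift parts to be constant there, contradicting strict increase on a non-polar set. Enclosing $B$ in an open $G_2$ with $c(G_2)<\varepsilon/2$ and setting $G:=G_1\cup G_2$ yields $c(G)<\varepsilon$ with $\tau_a|_{G^c}$ continuous, which is precisely the definition of quasi-continuity of $\tau_a$. The main obstacle is the capacity bound $c(B)=0$: for a single probability it is a routine semimartingale argument, but in the $G$-setting the estimate must hold uniformly over the representing family $\mathcal{P}$. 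I would handle this by first reducing $Z,\eta,\zeta$ to step processes (for which the strict-increase condition is piecewise explicit and the optional-stopping step becomes an elementary variance computation), proving the analogous polarity at the step-process level, and then lifting back through the $H_G^1$ and $M_G^1$ norm approximations combined with the outer regularity of $c$ implicit in $c(A)=\sup_{P\in\mathcal{P}}P(A)$ on the Polish space $\Omega_T$.
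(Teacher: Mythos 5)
Your overall strategy---reduce quasi-continuity of $\tau_a$ to polarity of the set of paths that touch the level $a$ without strictly crossing it, and kill that set with the strict-increase hypothesis via a supermartingale/optional-stopping argument---is the same as the paper's. The paper introduces $\underline{\tau}_a:=\inf\{t\geq0\mid X_t\geq a\}\wedge T$, invokes Lemma 3.3 of \cite{Song11b} to reduce to showing that $[\tau_a>\underline{\tau}_a]$ is polar, and covers that event by $\mathcal{S}_a(X)$ (paths with $X\equiv a$ on a rational interval) together with countably many sets $[X_{r\wedge\tau_a}<X_{r\wedge\underline{\tau}_a}]$, each polar by the monotonicity assumptions and the estimate $\mathbb{E}[X_{r\wedge\underline{\tau}_a}-X_{r\wedge\tau_a}]\leq0$. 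But your write-up has a genuine defect at the corresponding step: the exceptional set $B$ you define is empty. If $\tau_a(\omega)<T$, then $\tau_a(\omega)$ is the infimum of $\{t: X_t(\omega)>a\}$, so for every $h>0$ there is $t\in[\tau_a,\tau_a+h)$ with $X_t(\omega)>a$, contradicting $\sup_{[\tau_a,(\tau_a+h)\wedge T]}X_t\leq a$. The set you actually need must be phrased through $\underline{\tau}_a$, namely $[\underline{\tau}_a<\tau_a]$. Relatedly, you attach the semicontinuities to the wrong hypotheses: upper semicontinuity of the entrance time of an open set along uniformly convergent sequences is automatic and needs no exceptional set, whereas lower semicontinuity is what fails, and it fails exactly on $[\underline{\tau}_a<\tau_a]$. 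Your second paragraph does contain the correct mechanism (touching the level forces the martingale and drift parts to be constant, contradicting strict increase), so the intent is recoverable, but as written the proof does not identify the right polar set.

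The second problem is that the ``main obstacle'' you single out---uniformity of the capacity bound over the representing family $\mathcal{P}$---is not an obstacle, and the route you propose for it would not work. Since $c(A)=\sup_{P\in\mathcal{P}}P(A)$, a set is polar iff it is null under each single $P\in\mathcal{P}$, so the routine per-$P$ semimartingale argument already yields $c=0$; no uniform estimate is needed. By contrast, replacing $Z,\eta,\zeta$ by step-process approximants destroys the pathwise strict-increase hypothesis and changes the very set whose polarity is at issue, and there is no mechanism for ``lifting polarity back'' through $H^1_G$ and $M^1_G$ convergence. The paper sidesteps all of this by running the optional-sampling estimate directly at the level of $\mathbb{E}=\sup_P E_P$, using that $\int_0^\cdot Z_s\cdot dB_s$ is a symmetric $G$-martingale and that the finite-variation part is non-decreasing, which gives $\mathbb{E}[X_{r\wedge\underline{\tau}_a}-X_{r\wedge\tau_a}]\leq0$ in one line and hence, combined with the pointwise inequality $X_{r\wedge\tau_a}\leq X_{r\wedge\underline{\tau}_a}$, the required polarity.
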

\begin {proof} Set $\underline{\tau}_a:= \inf\{t\geq 0| \ X_t\geq a\}\wedge T$. By Lemma 3.3 in \cite {Song11b}, it suffices to show that $[\tau_a>\underline{\tau}_a]$ is a polar set. Define
\[\mathcal{S}_a(X)=\{\omega\in \Omega_T| \ \textmd{there exists} \ (r, s)\in \mathcal{Q}_T \ s.t. \ X_t(\omega) = a \ \textmd{for all} \ t \in [s, r ]\},\]
where \[ \mathcal{Q}_T=\{(r, s)| \ T \geq r>s \geq 0, \ r, s \ \textmd{are rational}\}.\] It is clear that
\[[\tau_a>\underline{\tau}_a]\subset{\cal S}_a(X)\bigcup\cup_{r\in
\mathbb{Q}\cap[0, T]}[X_{r\wedge \tau_a}<X_{r\wedge
\underline{\tau}_a}],\] where $\mathbb{Q}$ denotes the totality of
rational numbers. By the assumption, we know that  ${\cal S}_a(X)$ is a polar set. Noting that $X_{r\wedge \tau_a}\leq X_{r\wedge
\underline{\tau}_a}$ and $\mathbb{E}[X_{r\wedge
\underline{\tau}_a}-X_{r\wedge \tau_a}]\leq0$, we conclude that $\cup_{r\in
\mathbb{Q}\cap[0, T]}[X_{r\wedge \tau_a}<X_{r\wedge
\underline{\tau}_a}]$ is also a polar set.
\end {proof}
\begin {corollary}Let $X_t=\int_0^t Z_s dB_s+\int_0^t\eta_s ds$ with $Z^{i,j}\in H^2_G(0,T)$ and $\eta^i\in M^2_G(0,T)$. Assume  $\int_0^t \mathrm{tr}[Z_s^*Z_sd\langle B\rangle_s]$ is strictly increasing. Then there exists a sequence of quasi-continuous stopping times $\tau_n$ such that $\tau_n\uparrow T$ and $(X_{t\wedge\tau_n})_{t\in[0,T]}$ is bounded.
\end {corollary}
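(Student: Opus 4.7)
My plan is to reduce the corollary to Lemma \ref{HT} by constructing a scalar auxiliary process whose sublevel sets automatically control $|X_t|$. The natural candidate is
\[
R_t := |X_t|^2 + \int_0^t |X_s|^2 \, ds + \int_0^t |\eta_s|^2 \, ds,
\]
and to set $\tau_n := \inf\{t \geq 0 : R_t > n\} \wedge T$; the bound $|X_t|^2 \leq R_t$ then immediately yields $|X_{t \wedge \tau_n}| \leq \sqrt{n}$, which is exactly the uniform boundedness sought.

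I would next apply the $G$-It\^o formula to $|X_t|^2$ and complete the square in the absolutely-continuous drift to obtain
\[
dR_t = 2 X_t^{\ast} Z_t \, dB_t + \mathrm{tr}[Z_t Z_t^{\ast} \, d\langle B\rangle_t] + |X_t + \eta_t|^2 \, dt.
\]
Both finite-variation contributions are non-negative, so the finite-variation part of $R_t$ is non-decreasing; since the hypothesis forces $\int_0^t \mathrm{tr}[Z_s Z_s^{\ast} d\langle B\rangle_s]$ to be strictly increasing, the combined increment process appearing in the hypothesis of Lemma \ref{HT} is strictly increasing as well. Continuity of $X$ together with $\int_0^T |\eta_s|^2 ds < \infty$ quasi-surely (from $\eta^i \in M^2_G(0,T)$) shows that $R$ is quasi-continuous on $[0,T]$ with finite terminal value, whence $\tau_n \uparrow T$ q.s.

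The main obstacle will be verifying the integrability hypotheses of Lemma \ref{HT} for this decomposition: the martingale coefficient $2XZ$ must belong to $[H^1_G(0,T)]^d$, and $|X + \eta|^2$, $(ZZ^{\ast})^{i,j}$ must belong to $M^1_G(0,T)$. Since $X$ is only quasi-surely continuous and not a priori bounded, $XZ$ need not lie globally in $H^1_G$. I would resolve this by a preliminary localization with the simpler non-decreasing process $S_t := \int_0^t \mathrm{tr}[Z_s Z_s^{\ast} d\langle B\rangle_s] + \int_0^t |\eta_s|^2 ds$, which fits Lemma \ref{HT} directly (with zero martingale coefficient): the associated hitting times $\sigma_m := \inf\{t : S_t > m\} \wedge T$ are quasi-continuous with $\sigma_m \uparrow T$ q.s. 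On each interval $[0, \sigma_m]$ the $G$-BDG inequality gives $\mathbb{E}[\sup_{t \leq \sigma_m} |X_t|^2] < \infty$, which places the coefficients of the truncated process $R_{\cdot \wedge \sigma_m}$ in the requisite spaces. Applying Lemma \ref{HT} to this truncated $R$ yields quasi-continuous hitting times, and a diagonal choice combining them with the $\sigma_m$ via minimum---noting that the minimum of two quasi-continuous stopping times is quasi-continuous, since $\min$ is continuous on the intersection of the sets where each is continuous---produces the desired sequence $\tau_n$.
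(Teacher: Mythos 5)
Your proposal is essentially the paper's own argument: the paper likewise applies It\^o's formula to $|X_t|^2$, dominates it by the auxiliary process $Y_t=\int_0^t 2X_s^*Z_s\,dB_s+\int_0^t 2|X_s\cdot\eta_s|\,ds+\int_0^t\mathrm{tr}[Z_s^*Z_sd\langle B\rangle_s]$ (your completion of the square plays exactly the role of the absolute value here), and takes $\tau_n$ to be the hitting times of this dominating process, which are quasi-continuous by Lemma \ref{HT}. The preliminary localization you add is unnecessary and slightly problematic: the $G$-BDG inequality already gives $\mathbb{E}[\sup_{t\leq T}|X_t|^2]<\infty$ globally from $Z^{i,j}\in H^2_G(0,T)$ and $\eta^i\in M^2_G(0,T)$, which by Cauchy--Schwarz places $2X^*Z$ in $[H^1_G(0,T)]^d$ and the drift in $M^1_G(0,T)$ without truncation, whereas the process stopped at $\sigma_m$ would fail the strict-increase hypothesis of Lemma \ref{HT} after time $\sigma_m$.
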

\begin {proof} Applying It\^o's formula to $|X_t|^2$, we have \[|X_t|^2=\int_0^t2X_s^*Z_sdB_s+\int_0^t2X_s\cdot\eta_sds+\int_0^t \mathrm{tr}[Z_s^*Z_sd\langle B\rangle_s]. \] Set $Y_t:=\int_0^t2X_s^*Z_sdB_s+\int_0^t2|X_s\cdot\eta_s|ds+\int_0^t \mathrm{tr}[Z_s^*Z_sd\langle B\rangle_s]$ and $\tau_n:=\inf\{t\geq0| \ Y_t>n\}\wedge T$. By Lemma \ref {HT} we get the desired result.
\end {proof}
\subsubsection{Estimates}

Set $h_s=-\sigma(Y_s)^{-1}\sigma(X_s)g_s$,where $g$ is defined in (\ref {Drift}). Choose a sequence of quasi-continuous stopping times $\tau_n$ such that $\tau_n\uparrow T$, $\tau_n\leq T-\frac{1}{n}$ and $h^n_s:=h_s1_{[0,\tau_n]}$ is  bounded. Set  $\tilde{B}^n_t:=B_t-\int_0^th^n_sds$ and $\tilde{\mathbb{E}}^n[\xi]=\mathbb{E}[U^{h^n}_T\xi]$ for $\xi\in L_{ip}(\Omega_T)$.

The result below is the counterpart of Proposition \ref {esti-drift-l} in the $G$-expectation space.
\begin {proposition}\label {Estimate} \[\widetilde{\mathbb{E}}^{h^n}[\exp\{\frac{\theta^2}{8\Lambda_\sigma^2\Lambda_\Gamma^2}\int_0^{\tau_n}\frac{|Z_s|^2}{\xi_s^2}ds\}]
\leq \exp\{\frac{\theta}{8\Lambda_\sigma^2\Lambda_\Gamma^2\xi_0}|x-y|^2\}.\]
\[\mathbb{E}[| U^{h}_{\tau_n}|^{1+\delta}]\leq \exp\{\frac{\theta\sqrt{1+\delta^{-1}}}{8\Lambda_\sigma^2\Lambda_\Gamma^2\xi_0(1+\sqrt{1+\delta^{-1}})}|x-y|^2\},\]
 where $\delta:=\frac{\theta^2}{4\Lambda_\sigma^2\beta_\sigma^2\beta_\Gamma^2+4\theta \Lambda_\sigma\beta_\sigma\beta_\Gamma}$.
\end {proposition}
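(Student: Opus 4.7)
The plan is to follow Wang's proof of Proposition \ref{esti-drift-l} verbatim, with the $G$-expectation adaptations: replace $\hat{X}$ by $Z$, replace the Wiener-measure Girsanov identity $d\tilde P^n/dP^\varepsilon = U^{h^n}_T$ by the $G$-Girsanov formula $\tilde{\mathbb{E}}^{h^n}[\cdot] = \mathbb{E}[U^{h^n}_T \cdot]$ from Section 4.3, and keep track of the quadratic-variation bound $d\langle \tilde B^n\rangle_r \leq \Lambda_\Gamma^2 I\,dr$ coming from $\Gamma \subset \{\gamma:\lambda_\Gamma^2 I\leq \gamma\leq \Lambda_\Gamma^2 I\}$.

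For the first estimate, the starting point is inequality (\ref{lp-Esti}) with $p=1$; the definition of $\xi$ (Section 4.1) gives $\tfrac{L}{2}\xi_r - \tfrac{\lambda_\sigma^2}{\Lambda_\sigma} - \tfrac{1}{2}\xi'_r = -\theta$, so that
\[
\int_0^{\tau_n} \frac{|Z_r|^2}{\xi_r^2}\,dr \;\leq\; \frac{|x-y|^2}{2\theta\xi_0} + \frac{1}{\theta}\int_0^{\tau_n}\frac{\hat\sigma(r)^* Z_r}{\xi_r}\cdot d\tilde B^n_r,
\]
since $h = h^n$ on $[0,\tau_n]$ and hence $\tilde B = \tilde B^n$ there. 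Write $I:=\int_0^{\tau_n}|Z_r|^2/\xi_r^2\,dr$ and denote by $N$ the stochastic integral above (without the $1/\theta$). Multiply by $a>0$, exponentiate, and decompose $e^{aN_{\tau_n}/\theta} = \mathcal{E}(2aN/\theta)^{1/2}\exp\{(a^2/\theta^2)\langle N\rangle_{\tau_n}\}$. Apply Cauchy-Schwarz under $\tilde{\mathbb{E}}^{h^n}$, use that $\mathcal{E}(2aN/\theta)$ is a symmetric $G$-martingale (so its $\tilde{\mathbb{E}}^{h^n}$-expectation is $\leq 1$) and the bound $\langle N\rangle_{\tau_n} \leq 4\Lambda_\sigma^2\Lambda_\Gamma^2\, I$; this gives a self-referential inequality for $\tilde{\mathbb{E}}^{h^n}[e^{aI}]$ which closes at $a = \theta^2/(8\Lambda_\sigma^2\Lambda_\Gamma^2)$.

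For the second estimate, apply the $G$-Girsanov identity to pass from $\mathbb{E}[(U^{h^n}_T)^{1+\delta}]$ to $\tilde{\mathbb{E}}^{h^n}[(U^{h^n}_T)^\delta]$, and then rewrite $(U^{h^n}_T)^\delta$ using $\tilde B^n = B - \int h^n\,ds$; the key algebraic identity (analogous to Wang's) yields
\[
(U^{h^n}_T)^\delta = \exp\bigl\{\delta M_T + \tfrac{\delta}{2}\langle M\rangle_T\bigr\},\qquad M_t := \int_0^t h^n_s\cdot d\tilde B^n_s.
\]
For $q>1$, split $\delta M_T + \tfrac{\delta}{2}\langle M\rangle_T = \bigl(\delta M_T - \tfrac{q\delta^2}{2}\langle M\rangle_T\bigr) + \bigl(\tfrac{q\delta^2+\delta}{2}\langle M\rangle_T\bigr)$ and apply Hölder; the first factor produces the stochastic exponential $\mathcal{E}(q\delta M)$ whose $\tilde{\mathbb{E}}^{h^n}$-expectation is $\leq 1$, and the second factor becomes $(\tilde{\mathbb{E}}^{h^n}[\exp\{\frac{q\delta(1+\delta q)}{2(q-1)}\langle M\rangle_T\}])^{(q-1)/q}$. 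Using $\langle M\rangle_T \leq \Lambda_\Gamma^2\beta_\sigma^2 \int_0^{\tau_n}|Z_s|^2/\xi_s^2\,ds$ and the first estimate, optimize at $q=1+\sqrt{1+\delta^{-1}}$ so that $\frac{q\delta(1+\delta q)}{2(q-1)} = \tfrac{1}{2}(\delta+\sqrt{\delta^2+\delta})^2$; solving the equation that matches this constant (times the QV-bound factor) with the coefficient in the first estimate determines $\delta$ and produces the stated exponential bound.

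The principal obstacle is not algebraic but structural: $U^{h^n}_T$ is built from the auxiliary process $B'$ on the extended $\hat G$-expectation space $(\hat\Omega_T, L^1_{\hat G},\mathbb{E})$, so one must verify (i) that the Girsanov identity $\tilde{\mathbb{E}}^{h^n}[\xi] = \mathbb{E}[U^{h^n}_T\xi]$ applies to $\xi = (U^{h^n}_T)^\delta$ (which itself depends on $B'$ and $\langle B'\rangle$), and (ii) that stochastic exponentials such as $\mathcal{E}(q\delta M)$ are $\tilde{\mathbb{E}}^{h^n}$-supermartingales. Both points are handled by exploiting the boundedness of $h^n$ on $[0,\tau_n]$ (so that all exponents are bounded and the exponentials lie in $L^1_{\hat G}$) and by invoking the representation $\mathbb{E}=\sup_{P\in\mathcal P}E_P$, reducing the required inequalities to their classical counterparts under each $P\in\mathcal P$ and then taking the supremum.
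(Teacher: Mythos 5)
Your proposal follows exactly the route the paper intends: the paper's own proof is a two-line remark reducing everything to Proposition \ref{esti-drift-l}, using (a) that $\tilde{B}^n$ is a $G$-Brownian motion under $\tilde{\mathbb{E}}^{n}$ by the bounded-drift Girsanov transformation, and (b) that the exponent of $U^{h^n}$ is (up to the quadratic-variation correction) a symmetric martingale under $\tilde{\mathbb{E}}^{n}$. Your treatment of the first estimate is right, including the closing value $a=\theta^2/(8\Lambda_\sigma^2\Lambda_\Gamma^2)$ coming from $d\langle \tilde B^n\rangle_r\leq \Lambda_\Gamma^2 I\,dr$, and your final paragraph correctly identifies the structural issues (extended space, representation $\mathbb{E}=\sup_P E_P$) that make the classical argument transfer.

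There is, however, one concrete slip in the second estimate. The density $U^{h^n}$ is built from the \emph{auxiliary} process $B'$ on the extended space, so the relevant martingale is the paper's $M_t=\int_0^{t\wedge\tau_n}h_s\cdot dB'_s-\int_0^{t\wedge\tau_n}\mathrm{tr}[h_sh_s^*\,d\langle B'\rangle_s]$, not your $M_t=\int_0^t h^n_s\cdot d\tilde B^n_s$. Since $\langle B'\rangle$ has density $\gamma^{-1}$ with $\gamma\in\Gamma$, the correct quadratic-variation bound is $\langle M\rangle_T\leq \lambda_\Gamma^{-2}\int_0^{\tau_n}|h_s|^2ds\leq (\beta_\sigma^2/\lambda_\Gamma^2)\int_0^{\tau_n}|Z_s|^2/\xi_s^2\,ds$, whereas you use $\Lambda_\Gamma^2\beta_\sigma^2$ (the bound appropriate to $\langle B\rangle$, not $\langle B'\rangle$). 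With your constant the matching equation at $q=1+\sqrt{1+\delta^{-1}}$ yields $\delta=\theta^2/(4\Lambda_\sigma^2\Lambda_\Gamma^4\beta_\sigma^2+4\theta\Lambda_\sigma\Lambda_\Gamma^2\beta_\sigma)$ rather than the stated $\delta=\theta^2/(4\Lambda_\sigma^2\beta_\sigma^2\beta_\Gamma^2+4\theta\Lambda_\sigma\beta_\sigma\beta_\Gamma)$; only the $\lambda_\Gamma^{-2}=\beta_\Gamma^2/\Lambda_\Gamma^2$ bound reproduces the proposition (this is also consistent with the bound $\mathbb{E}[\int_0^T\mathrm{tr}[h_sh_s^*d\langle B'\rangle_s]]\leq \beta_\sigma^2|x-y|^2/(2\lambda_\Gamma^2\theta'\xi_0)$ used later in the proof of Theorem \ref{main2}). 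The fix is local and does not affect the architecture of your argument, but as the proposition is precisely a statement about these constants, the identification of the driving process of $U^{h}$ needs to be corrected.
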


\begin {proof} By Girsanov transformation with bounded drifts, we know that $\tilde{B}^n_t$ is a $G$-Brownian motion under $\tilde{\mathbb{E}}^{n}$. Noting that $M_t:=\int_{0}^{t\wedge \tau_n} h_{s}\cdot dB'_{s}-\int_{0}^{t\wedge \tau_n} \mathrm{tr}[h_sh^*_s
d\langle B'\rangle_{s}]$ is a symmetric martingale under $\tilde{\mathbb{E}}^{n}$, the proof is similar to that of Proposition \ref {esti-drift-l}.
\end {proof}
\subsection {Girsanov Transformation with Unbounded Drifts}

 \begin {proposition} $\tilde{B}_t=B_t-\int_0^th_sds$ is a $G$-Brownian motion under $\tilde{\mathbb{E}}$.
 \end {proposition}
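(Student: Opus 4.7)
The plan is to approximate the unbounded drift $h$ by the bounded truncations $h^n := h\mathbf{1}_{[0,\tau_n]}$ already introduced before Proposition \ref{Estimate}, apply the bounded-drift Girsanov theorem from the preceding subsection to each $\tilde{B}^n_t := B_t - \int_0^t h^n_s\,ds$, and pass to the limit under the sublinear expectation using the uniform $L^{1+\delta}$ bound on $U^{h^n}_T$. Specifically, define $\tilde{\mathbb{E}}^n[\xi] := \mathbb{E}[U^{h^n}_T \xi]$; by the bounded-drift subsection, each $\tilde{B}^n$ is a $G$-Brownian motion under $\tilde{\mathbb{E}}^n$.

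First, I would upgrade the estimate in Proposition \ref{Estimate}: by Fatou on the sublinear expectation one obtains $\mathbb{E}[|U^h_T|^{1+\delta}] < \infty$, so that $\tilde{\mathbb{E}}[\xi] := \mathbb{E}[U^h_T\xi]$ is well-defined on $L_{ip}(\Omega_T)$ and extends continuously. Next, since $\mathbb{E}[\int_0^T |h_s|^2\,ds] < \infty$ (from the $p=1$ case of the quadratic estimate used to construct $g$ in \eqref{Drift}), the integrals $\int_0^T |h_s - h^n_s|\,ds \to 0$ quasi-surely, which yields $U^{h^n}_T \to U^h_T$ q.s.\ and $\tilde{B}^n_t \to \tilde{B}_t$ q.s. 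Combining quasi-sure convergence with the uniform $L^{1+\delta}$ bound gives a uniform integrability argument (valid in the sublinear setting thanks to the capacity representation) that upgrades this to $\tilde{\mathbb{E}}^n[\xi] \to \tilde{\mathbb{E}}[\xi]$ for every bounded quasi-continuous $\xi$.

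Second, I would verify the defining PDE characterization of a $G$-Brownian motion for $\tilde{B}$ under $\tilde{\mathbb{E}}$: for each $\varphi \in C_{b,Lip}(\mathbb{R}^d)$ and $0 \le s < t$,
\[
\tilde{\mathbb{E}}_s[\varphi(x + \tilde{B}_t - \tilde{B}_s)] \big|_{x = \tilde{B}_s} = u(t-s, \tilde{B}_s), \qquad \partial_t u = G(D_x^2 u), \quad u(0,\cdot) = \varphi.
\]
For each $n$, the bounded-drift subsection already gave this identity for $\tilde{B}^n$ and $\tilde{\mathbb{E}}^n$. Passing to the limit on both sides via Step 1 yields the same identity for $\tilde{B}$ and $\tilde{\mathbb{E}}$. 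By a standard inductive argument over finitely many times $0 \le t_1 < \cdots < t_k \le T$ and the tower property of $\tilde{\mathbb{E}}$, this identifies the finite-dimensional sublinear distributions of $\tilde{B}$ with those of a $G$-Brownian motion.

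The main obstacle will be the $L^1(\mathbb{E})$-convergence $U^{h^n}_T \to U^h_T$: in the $G$-expectation framework, dominated-convergence arguments require a domination that is uniform across the representing family $\mathcal{P}$, which is precisely the content of the capacity-level estimate in Proposition \ref{Estimate}. Additional care is needed near the terminal time, where the drift $h_t = -\frac{1}{\xi_t}\sigma(Y_t)^{-1}\sigma(X_t)(X_t - Y_t)$ is formally singular; here one uses the coupling identity $X_T = Y_T$ q.s.\ together with the quantitative bound $\tilde{\mathbb{E}}^n[\exp\{\frac{\theta^2}{8\Lambda_\sigma^2\Lambda_\Gamma^2}\int_0^{\tau_n}\frac{|Z_s|^2}{\xi_s^2}ds\}]$ from Proposition \ref{Estimate} to rule out any mass escaping at $t = T$.
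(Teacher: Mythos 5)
Your overall architecture matches the paper's: truncate the drift at the quasi-continuous stopping times $\tau_n$, apply the bounded-drift Girsanov theorem to each $\tilde{B}^n_t=B_t-\int_0^t h^n_s\,ds$, and pass to the limit using the uniform $L^{1+\delta}$ bound of Proposition \ref{Estimate}. The gap is in your limit passage. You claim that quasi-sure convergence $U^{h^n}_T\to U^h_T$ together with the uniform $L^{1+\delta}$ bound yields $\mathbb{E}[U^{h^n}_T\xi]\to\mathbb{E}[U^h_T\xi]$ by a ``uniform integrability argument''. Under a non-dominated family $\mathcal{P}$ this implication fails in general: quasi-sure convergence does not imply convergence in capacity (the subsequence mechanism that upgrades a.s.\ convergence to convergence in measure is not available uniformly in $P\in\mathcal{P}$), and a Vitali-type theorem in the sublinear setting requires convergence in capacity, not merely q.s.\ convergence. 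So the key analytic step of the whole proposition --- $\mathbb{E}[|U^h_T-U^{h^n}_T|]\to 0$ --- is asserted rather than proved, and the mechanism you invoke for it does not work as stated.

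The paper closes exactly this gap by a different route. It works at the level of the exponents $u^{h}_t=\int_0^t h_s\,dB'_s-\frac12\int_0^t\mathrm{tr}[h_sh_s^*\,d\langle B'\rangle_s]$: since $\int_0^T|h_s-h^n_s|^2\,ds=\int_{\tau_n}^T|h_s|^2\,ds$ decreases to $0$, the monotone convergence theorem under $G$-expectation (Theorem 31 of Denis et al.) gives $\mathbb{E}[\int_0^T|h_s-h^n_s|^2\,ds]\to 0$, hence $\mathbb{E}[|u^h_T-u^{h}_{\tau_n}|]\to 0$ --- a genuine norm convergence with no appeal to q.s.\ limits. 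This is then transferred to the exponentials via the elementary inequality $|e^x-e^y|\le e^m|x-y|+e^{-\delta m}e^{(1+\delta)x}+e^{-\delta m}e^{(1+\delta)y}$, letting first $n\to\infty$ and then $m\to\infty$, the last two terms being absorbed by the uniform $(1+\delta)$-moment bound of Proposition \ref{Estimate}. If you want to rescue your write-up, this is the missing ingredient. A secondary difference: the paper identifies $\tilde{B}$ as a $G$-Brownian motion by matching the unconditional finite-dimensional distributions, showing $\mathbb{E}[U^{h^n}_T\varphi(\tilde{B}^n_{t_1},\dots,\tilde{B}^n_{t_k})]=\mathbb{E}[\varphi(B_{t_1},\dots,B_{t_k})]$ and passing to the limit; this avoids the additional (nontrivial) work your tower-property argument would require to construct and control the conditional expectations $\tilde{\mathbb{E}}_s$ of the limiting sublinear expectation.
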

 \begin {proof}Noting that \[|e^x-e^y|\leq e^m|x-y|+e^{-\delta m}e^{(1+\delta)x}+e^{-\delta m}e^{(1+\delta)y},\] we have
 \begin {eqnarray*}& &\mathbb{E}[| U^h_T- U^{h}_{\tau_n}|]
 \leq e^m \mathbb{E}[|u^h_T -u^{h}_{\tau_n}|]+e^{-m\delta}\mathbb{E}[| U^{h}_{\tau_n}|^{(1+\delta)}]+e^{-m\delta}\mathbb{E}[| U^{h}_T|^{(1+\delta)}],
 \end {eqnarray*} where $u^h_t=\int_0^th_s dB'_s-\frac{1}{2}\int_0^t\mathrm{tr}[h_sh^*_s d\langle B'\rangle_s]$. By the Monotone Convergence Theorem under $G$-expectation (Theorem 31, Denis, et al (2011)), we have \[\lim_{n\rightarrow\infty}\mathbb{E}[\int_0^T|h_s-h^n_s|^2ds]=0.\] So we conclude that \[\lim_{n\rightarrow\infty}\mathbb{E}[|u^h_T -u^{h}_{\tau_n}|]=0.\] First letting $n$ go to infinity, then letting $m$ go to infinity, we get that \[\mathbb{E}[| U^h_T- U^{h}_{\tau_n}|]\rightarrow0\] by Proposition \ref {Estimate}.

 For a function $\varphi\in C_{b,Lip}(\mathbb{R}^k)$ and a partition of $[0,T]$: $0\leq t_1 <t_2<\cdot\cdot\cdot<t_k\leq T$, set $\xi=\varphi(\tilde{B}_{t_1}, \cdot\cdot\cdot, \tilde{B}_{t_k})$ and $\xi^n=\varphi(\tilde{B}^n_{t_1}, \cdot\cdot\cdot, \tilde{B}^n_{t_k})$.
\begin {eqnarray*}|\mathbb{E}[ U^h_T\xi]-\mathbb{E}[ U^{h^n}_T\xi^n]|\leq|\mathbb{E}[ U^h_T\xi]-\mathbb{E}[ U_T^{h^n}\xi]|
+|\mathbb{E}[ U^{h^n}_T\xi]-\mathbb{E}[ U^{h^n}_T\xi^n]|\rightarrow0.
\end {eqnarray*} By Girsanov transformation with bounded drifts,  $\mathbb{E}[ U^{h^n}_T\xi^n]=\mathbb{E}[\varphi(B_{t_1}, \cdot\cdot\cdot, B_{t_k})]$ for each $n$. By this, we get the desired result.
\end {proof}

\begin {corollary}$ \label {coupling} \mathbb{E}[\varphi(X^y_T)]=\tilde{\mathbb{E}}[\varphi(Y^y_T)]$ for any $\varphi\in C_b(\mathbb{R}^d)$.
\end {corollary}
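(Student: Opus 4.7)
The plan is to recognize that under $\tilde{\mathbb{E}}$, the coupled process $Y$ satisfies exactly the same SDE (with the same coefficients, same Lipschitz data, and $\tilde{B}$ in place of $B$) that defines $X^y$ under $\mathbb{E}$, so the equality of expectations follows from uniqueness in law of $G$-SDEs.

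First, I would rewrite the extended equation (\ref{equYe}) for $Y$ by substituting $dB_t = d\tilde{B}_t + h_t\,dt$, which is legitimate because the preceding proposition shows that $\tilde{B}_t = B_t - \int_0^t h_s\,ds$ is a $G$-Brownian motion under $\tilde{\mathbb{E}}$. The key algebraic observation is that, by the definitions $h_s = -\sigma(Y_s)^{-1}\sigma(X_s)g_s$ and $g_s = \frac{1}{\xi_s}(X_s - Y_s)$ on $[0,T)$, the term $\sigma(Y_t)h_t\,dt$ produced by Girsanov exactly cancels the coupling drift $\sigma(X_t)g_t\,dt$. Hence
\[
dY_t = \sigma(Y_t)\,d\tilde{B}_t + b(Y_t)\,dt,\qquad Y_0 = y,
\]
i.e.\ $Y$ solves the canonical SDE for initial value $y$ but driven by $\tilde{B}$.

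Next, since $\tilde{B}$ is a $G$-Brownian motion under $\tilde{\mathbb{E}}$ and $\sigma, b$ are Lipschitz under Hypothesis (\textbf{G}), the standard well-posedness theory for $G$-SDEs applies: the solution is unique in law, so the distribution of $Y_T$ under $\tilde{\mathbb{E}}$ must coincide with the distribution of $X^y_T$ under $\mathbb{E}$. For $\varphi \in C_b(\mathbb{R}^d)$ this immediately yields $\tilde{\mathbb{E}}[\varphi(Y_T)] = \mathbb{E}[\varphi(X^y_T)]$.

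The main obstacle I anticipate is making the cancellation rigorous at the endpoint $t=T$: the identity $g_s = (X_s - Y_s)/\xi_s$ holds only on $[0,T)$ (since $\xi_T = 0$ and $X_T = Y_T$ quasi-surely by the earlier extension proposition), yet $g$ is only known to belong to $[M_G^p(0,T)]^d$. One must therefore argue the cancellation on each $[0,T-1/n]$, where $h^n = h\mathbf{1}_{[0,\tau_n]}$ is bounded and the bounded-drift Girsanov theory applies pointwise, and then pass to the limit using the $L^1$-convergence $\mathbb{E}[|U_T^h - U_{\tau_n}^h|] \to 0$ established in the preceding proposition together with the continuity of $Y_t$ at $T$. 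Once the limit is justified, the identification of the law of $Y$ under $\tilde{\mathbb{E}}$ as that of $X^y$ under $\mathbb{E}$ is routine.
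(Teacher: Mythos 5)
Your proposal is correct and follows exactly the route the paper intends: the cancellation you describe is precisely the paper's rewritten equation (\ref{equYr}), $dY_t=\sigma(Y_t)\,d\tilde{B}_t+b(Y_t)\,dt$, and combined with the preceding proposition that $\tilde{B}$ is a $G$-Brownian motion under $\tilde{\mathbb{E}}$, uniqueness in law for $G$-SDEs gives the identity (the paper states the corollary without further proof for this reason). Your attention to the endpoint $t=T$, handled via the extension proposition $X_T=Y_T$ q.s.\ and the membership $g\in[M_G^p(0,T)]^d$, matches the paper's treatment as well.
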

\subsection {Proof to the Gradient Estimates}

\textbf{Proof to Theorem \ref {main2}.} (\ref {lp-esti}-\ref {lp-Esti}) shows that
\begin {eqnarray*}
\int_0^t\frac{|Z_r|^2}{\xi_r^2}dr &\leq& \frac{|x-y|^2}{2\theta'\xi_0}+\int_0^t\frac{1}{\theta'\xi_r}\hat{\sigma}(r)^* Z_r\cdot d B_r,\\
\int_0^t\frac{|Z_r|^2}{\xi_r^2}dr &\leq& \frac{|x-y|^2}{2\theta\xi_0}+\int_0^t\frac{1}{\theta\xi_r}\hat{\sigma}(r)^* Z_r\cdot d \tilde{B}_r,
\end {eqnarray*} where $\theta'=\theta+\lambda_\sigma-\lambda_\sigma^2\Lambda_\sigma^{-1}$. So
\begin {eqnarray*}
\mathbb{E}[\int_0^T \mathrm{tr}[h_sh_s^*d\langle B'\rangle_s]]\leq \frac{ \beta_\sigma^2}{2\lambda_\Gamma^2\theta'\xi_0}|x-y|^2,\\
\tilde{\mathbb{E}}[\int_0^T \mathrm{tr}[h_sh_s^*d\langle B'\rangle_s]]\leq \frac{ \beta_\sigma^2}{2\lambda_\Gamma^2\theta\xi_0}|x-y|^2.
\end {eqnarray*} By Corollary \ref {coupling},
\begin {eqnarray*}& &|\mathbb{E}[\varphi(X^y_T)]-E[\varphi(X^x_T)]|\\
&=&|\tilde{\mathbb{E}}[\varphi(X^x_T)]-\mathbb{E}[\varphi(X^x_T)]| \\
&\leq& \|\varphi\|_\infty \mathbb{E}[| U^h_T-1|]\\
&\leq& \|\varphi\|_\infty (\tilde{\mathbb{E}}[|u^h_T|]+\mathbb{E}[|u^h_T|])\\
&\leq& 2\|\varphi\|_\infty( \frac{ \beta_\sigma^2}{4\lambda_\Gamma^2\theta\xi_0}|x-y|^2+ \sqrt{\frac{\beta_\sigma^2}{2\lambda_\Gamma^2\theta\xi_0}}|x-y|).
\end {eqnarray*}
 So, by Lemma \ref {observation},
\[|\mathbb{E}[\varphi(X^y_T)]-\mathbb{E}[\varphi(X^x_T)]|\leq \sqrt{\frac{2\beta_\sigma^2}{\lambda_\Gamma^2 \theta \xi_0}} \|\varphi\|_\infty|x-y|.\]

 Taking $\theta=\frac{\lambda_\sigma^2\Lambda_\sigma^{-1}}{2}$, we get
\[|\mathbb{E}[\varphi(X^y_T)]-\mathbb{E}[\varphi(X^x_T)]|\leq \frac{2\Lambda_\sigma^2}{\lambda_\sigma^3\lambda_\Gamma}\frac{\|\varphi\|_\infty}{\sqrt{(1-e^{-LT})/L}}|x-y|. \]
$\square$


\renewcommand{\refname}{\large References}{\normalsize \ }

\end{document}